\newtheorem{theorem}{Theorem}[section]
\newtheorem{proposition}{Proposition}[section]
\newtheorem{lemma}[theorem]{Lemma}
\theoremstyle{remark}
\newtheorem{remark}{\bf Remark}[section]
\def\nu{n}
\def\d{{\mathrm d}}
\def\D{{\underline{D}}}
\def\R{{\mathbb R}}
\newcommand\bfe{{\mathbf e}}
\newcommand\bfw{{\mathbf w}}
\newcommand\bfx{{\mathbf x}}
\newcommand\bfz{{\mathbf z}}
\newcommand\bfA{{\mathbf A}}
\newcommand\bfM{{\mathbf M}}
\renewcommand\d{\text{d}}
\newcommand{\laplace}{\Delta}
\newcommand{\nb}{\nabla}
\newcommand{\spn}{\textnormal{span}}
\def \to {\rightarrow}
\newcommand{\dof}{N}
\title{ 
Convergence of Dziuk's semidiscrete finite element method for mean curvature flow of closed surfaces with high-order finite elements
} 
\author{Buyang Li}
\address{Department of Applied Mathematics, 
The Hong Kong Polytechnic University, Hung Hom, Hong Kong.} 
\email {\href{mailto:buyang.li@polyu.edu.hk}{buyang.li{\it @}polyu.edu.hk}}
\begin{document}

\maketitle
%\slugger{mms}{xxxx}{xx}{x}{x--x}%slugger should be set to mms, siap, sicomp, sicon, sidma, sima, simax, sinum, siopt, sisc, or sirev

\begin{abstract}
\small  
Dziuk's surface finite element method for mean curvature flow has had significant impact on the development of parametric and evolving surface finite element methods for surface evolution equations and curvature flows. However, the convergence of Dziuk's surface finite element method for mean curvature flow of closed surfaces  still remains open since it was proposed in 1990. 
In this article, we prove convergence of Dziuk's semidiscrete surface finite element method with high-order finite elements for mean curvature flow of closed surfaces. The proof utilizes the matrix-vector formulation of evolving surface finite element methods and a monotone structure of the nonlinear discrete surface Laplacian proved in this paper. 

\end{abstract}

%\begin{keywords}
%mean curvature flow, evolving surface, finite element method, convergence, error estimate
%\end{keywords}

%\begin{AMS}\small
%76D05, 76D03, 35B65
%\end{AMS}

\pagestyle{myheadings}
\markboth{}{}

\section{Introduction}\label{sec:intro}

We consider the evolution of a closed surface under mean curvature flow, moving with velocity $v=-Hn$, where $H$ and $n$ are the mean curvature and outward unit normal vector of the surface. The surface at time $t\in[0,T]$ can be described by 
$$ 
\Gamma(t)=\Gamma[X(\cdot,t)] = \{X(p,t): p \in \Gamma^0 \},\quad t\in[0,T] ,
$$
as the image of a flow map $X:\Gamma^0\times[0,T] \rightarrow \R^3$, which is a smooth embedding at every time $t\in[0,T]$ from a given closed initial surface $\Gamma^0$ into $\R^3$, satisfying the following geometric evolution equation: 
\begin{align}\label{PDE-X-P}
\left\{\begin{aligned}
&\partial_t X(p,t) =
(\Delta_{\Gamma[X(\cdot,t)]}{\rm id})\circ X(p,t) &&\mbox{for}\,\,\, p \in \Gamma^0\,\,\,\mbox{and}\,\,\,t\in(0,T], \\[5pt] 
&X(p,0)= p &&\mbox{for}\,\,\, p\in \Gamma^0 ,
\end{aligned}
\right.
\end{align}
where $\Delta_{\Gamma[X(\cdot,t)]}$ denotes the Laplace--Beltrami operator on the surface $\Gamma[X(\cdot,t)]$, and ${\rm id}$ is the identity function satisfying ${\rm id}(x)=x$ for all $x\in\R^3$. 

Numerical approximation to mean curvature flow by parametric finite element method was first considered by Dziuk \cite{Dziuk90} in 1990. The method determines the parametrization of the unknown surface by solving partial differential equations on a surface using the surface finite element method (FEM). 
%Assuming that the surface at time $t=t_{m-1}$ is approximated by a triangulated surface $\Gamma_h^{m-1}$, Dziuk proposed a finite element method (FEM) to determine a parameterization $X_h^{m}: \Gamma_h^{m-1} \rightarrow \Gamma_h^{m}\subset\R^3$ based on a weak formulation on the surface $\Gamma_h^{m-1}$, i.e., find $X_h^m\in S_h(\Gamma_h^{m-1})$ such that 
%\begin{align}\label{dtXhm}
%\int_{\Gamma_h^{m-1}} \frac{X_h^{m}-{\rm id}}{\tau} \cdot \chi_h  
%+ \int_{\Gamma_h^{m-1}} \nabla_{\Gamma_h^{m-1}} X_h^{m} : \nabla_{\Gamma_h^{m-1}} \chi_h  =0 \quad \forall \chi_h \in S_h(\Gamma_h^{m-1}) ,
%\end{align}
%where $S_h(\Gamma_h^{m-1})$ is an $\R^3$-valued finite element space on $\Gamma_h^{m-1}$, and $\nabla_{\Gamma_h^{m-1}}$ denotes the $L^2$ inner product and the tangential gradient on the surface $\Gamma_h^{m-1}$, respectively. After solving equation \eqref{dtXhm} for $X_h^m$, the new surface $\Gamma_h^{m}$ is obtained as the image of the map $X_h^{m}$, i.e., $\Gamma_h^{m}=X_h^{m}(\Gamma_h^{m-1})$. Since this method seeks a parametrization $X_h^m$ to determine the surface $\Gamma_h^m$, it is often referred to as parametric FEM. 
The evolution of the nodes determines the approximate evolving surface. This idea has had significant influence on the development of surface FEMs for many different types of geometric evolution equations, and was systematically developed to the evolving surface FEMs in \cite{DziukElliott_ESFEM}. 

However, proving convergence of Dziuk's method for mean curvature flow of closed surfaces remains still open. For curve shortening flow, convergence of semidiscrete FEM was proved in \cite{Dziuk94}; convergence of nonlinearly implicit and linearly implicit FEMs were proved in \cite{2006-Rusu} and \cite{Li-2020-SINUM}, respectively. Convergence of non-parametric FEMs for mean curvature flow of graph surfaces was proved by Deckelnick \& Dziuk \cite{deckelnick1995convergence,deckelnick2000error-graph}, but the analysis cannot be extended to closed surfaces. 

Many other techniques were also developed for approximating mean curvature flow. For example, Deckelnick \& Dziuk \cite{DeckelnickDziuk} has introduced an artificial tangential velocity to reformulate curve shortening flow into a non-divergence form; Barrett, Garcke \& N{\"u}rnberg introduced a parametric FEM based on a different variational formulation  \cite{BGN2008} and a parametric FEM based on choosing different test functions \cite{Barrett-2010-4}; Elliott and Fritz \cite{Elliott-Fritz-2017} introduced DeTurck's trick of re-parametrization into the computation of mean curvature flow, leading to a non-degenerate parabolic system in a non-divergence form, which generalizes the reformulation of Deckelnick \& Dziuk in \cite{DeckelnickDziuk}. 

For all the methods mentioned above, convergence of semi- and fully discrete FEMs for mean curvature flow of closed surfaces remains open. Convergence of semidiscrete FEMs was proved for curve shortening flow in \cite{DeckelnickDziuk,Elliott-Fritz-2017}, for anistropic curve shortening flow in \cite{Dziuk1999,Pozzi2007}, for curve shortening flow coupled with reaction--diffusion in \cite{Barrett-Deckelnick-Styles-2017,Pozzi-Stinner-2017}, and for mean curvature flow of axisymmetric surfaces in \cite{2019-Barrett-Deckelnick-Nurnberg} based on DeTurck's trick. The only convergence result of surface FEMs for mean curvature flow of closed surfaces was in \cite{Kovacs-Li-Lubich-2019} for an equivalent system of equations governing the evolution of normal vector and mean curvature, instead of for the original equation \eqref{PDE-X-P} used by Dziuk \cite{Dziuk90} and many others. 

In this paper, we prove convergence of Dziuk's semidiscrete FEM for mean curvature flow of closed surfaces for sufficiently high-order finite elements. Our proof utilizes two ideas, i.e, the matrix-vector formulation of the evolving surface FEM and the monotone structure of the finite element discrete operator associated to $-\Delta_{\Gamma[X]}{\rm id} \circ X $. The matrix-vector formulation was used in \cite{KLLP2017} in analysis of convergence of evolving surface FEMs for solution-driven surfaces; the monotone structure of the nonlinear finite element discrete operator associated to $-\Delta_{\Gamma[X]}{\rm id} \circ X $ was used in \cite{Li-2020-SINUM} for analysis of curve shortening flow. 
%These two techniques also provide a foundation for numerical analysis of mean curvature flow and related problems. 

In the following, we briefly explain the two ideas in proving convergence of Dziuk's semidiscrete FEM for mean curvature flow of closed surfaces. 

Let $\bfx^0 =(p_1,\dots,p_N)^T$ be the vector that collects all nodes $p_j\in\Gamma^0$, $j=1,\dots,\dof,$ in a triangulation of the initial surface $\Gamma^0$ (with finite elements of degree $k$). The nodal vector $\bfx^0 $ defines an approximate surface $\Gamma_h^0$ that interpolates $\Gamma^0$ at the nodes $p_j$. 
We evolve the vector $\bfx^0$ in time and denote its position at time $t$ by $\bfx(t)$, which  determines the approximate surface $\Gamma_h[\bfx (t)]$ to mean curvature flow and  satisfies an ordinary differential equation in the matrix-vector form (see Section \ref{section:result} for details)
\begin{align}\label{FEM0}
%\left\{\begin{aligned}
%&
{\bf M}(\bfx )\dot \bfx  + {\bf A}(\bfx )\bfx  = {\bf 0} ,
%\\
%&\bfx (0)=\bfx ^0 . 
%\end{aligned}
%\right.
\end{align}
with initial value $\bfx(0)=\bfx^0$, 
where ${\bf M}(\bfx )$ and ${\bf A}(\bfx )$ are the mass and stiffness matrices on the surface $\Gamma_h[\bfx]$. 
Equation \eqref{FEM0} is the matrix-vector formulation of Dziuk's semidiscrete FEM. Correspondingly, Dziuk's linearly implicit parametric FEM in \cite{Dziuk90} is equivalent to the following linearly implicit Euler method for \eqref{FEM0}: 
\begin{align}\label{linearly-implicit-FEM}
{\bf M}(\bfx ^{n-1})\frac{\bfx ^n-\bfx ^{n-1}}{\tau} + {\bf A}(\bfx ^{n-1})\bfx ^n = {\bf 0} ,
\end{align}
where $\tau$ denotes the stepsize of time discretization. 

As mentioned in \cite{Barrett-Deckelnick-Styles-2017,Li-2020-SINUM}, the main difficulty of numerical analysis for mean curvature flow \eqref{PDE-X-P} is the lack of full parabolicity, namely there does not exist a positive constant $\lambda$ satisfying  
\begin{align}\label{parabolicity} 
- ( \Delta_{\Gamma[X]}{\rm id}  \circ X - \Delta_{\Gamma[Y]}{\rm id} \circ Y)\cdot (X-Y) 
\ge \lambda |\nabla_{\Gamma^0}(X-Y)|^2 , 
\end{align}
even if the two flow maps $X$ and $Y$ are smooth and sufficiently close to each other. Similarly, if we denote by $\Gamma_h[\bfx^*]$ the interpolated surface of exact surface $\Gamma$, and denote by $\|\cdot\|_{{\bf A}(\bfx^*)}$ the discrete $H^1$ semi-norm on $\Gamma_h[\bfx ^*]$ defined by
$$
\|{\bf v}\|_{{\bf A}(\bfx ^*)}^2
:= {\bf A}(\bfx ^*){\bf v}\cdot{\bf v} 
= \int_{\Gamma_h[\bfx ^*]} \nabla_{\Gamma_h[\bfx ^*]}v_h\cdot \nabla_{\Gamma_h[\bfx ^*]}v_h ,
$$
with $v_h$ denoting the finite element function on the surface $\Gamma_h[\bfx^*]$ with nodal vector ${\bf v}$, then there does not exist a positive constant $\lambda$ satisfying 
\begin{align}\label{parabolicity-FEM} 
\big({\bf A}(\bfx )\bfx -{\bf A}(\bfx ^*)\bfx ^*\big)\cdot(\bfx -\bfx ^*)  
\ge \lambda \|\bfx -\bfx ^*\|_{{\bf A}(\bfx ^*)} ^2 , 
\end{align}
even if the two vectors $\bfx $ and $\bfx ^*$ are sufficiently close to each other. This is the main difficulty in analysis of Dziuk's semidiscrete FEM for mean curvature flow of closed surfaces.

We overcome this difficulty by showing the following identity (a monotone structure): 
\begin{align}\label{monotone}
\big({\bf A}(\bfx )\bfx -{\bf A}(\bfx ^*)\bfx ^*\big)\cdot(\bfx -\bfx ^*) 
%&= \int_0^1 \int_{\Gamma_h^\theta}  {\rm tr}(E(I-P)E^T) \; \d \theta \notag \\
&= \int_0^1 \int_{\Gamma_h^\theta} |(\nabla_{\Gamma_h^\theta} e_h^\theta)\hat n_h^\theta|^2 \d\theta ,
\end{align}
where $e^\theta_h$ is the finite element function with nodal vector 
$$\bfe=\bfx -\bfx ^*$$ on the intermediate finite element surface $\Gamma_h^\theta=(1-\theta)\Gamma_h[\bfx ^*]+\theta\Gamma_h[\bfx ]$, and $\hat n_h^\theta$ is the unit normal vector on $\Gamma_h^\theta$. The identity \eqref{monotone} can be used to control the $H^1$ semi-norm of the {\it normal component} of the error. 
It was known for closed curves and was used to analyze convergence of Dziuk's linearly implicit FEM for curve shortening flow in \cite{Li-2020-SINUM}. We extend this approach to mean curvature flow of closed surfaces using the matrix-vector technique. 

In addition to \eqref{monotone}, we also show that 
\begin{align}\label{mass-trick}
({\bf M}(\bfx )\dot\bfx -{\bf M}(\bfx ^*)\dot \bfx ^*)  \cdot (\bfx-\bfx^*) 
\ge &
\frac12 \frac{\d}{\d t} \|\bfe\|_{\bfM(\bfx)}^2
-c\epsilon^{-1}\|\bfe\|_{\bfM(\bfx)}^2 \notag \\
&
-
\epsilon \int_0^1 \int_{\Gamma_h^\theta} |(\nabla_{\Gamma_h^\theta} e_h^\theta) \hat n_h^\theta|^2 \d\theta , 
\end{align} 
where $\epsilon$ can be arbitrarily small and $\|\bfe\|_{\bfM(\bfx)}$ is the discrete $L^2$ norm on the surface $\Gamma_h[\bfx]$, defined by 
$$
\|\bfe\|_{\bfM(\bfx)}^2
=\int_{\Gamma_h[\bfx]} 
|e_h|^2 ,
$$ 
with $e_h$ being the finite element function on the surface $\Gamma_h[\bfx]$ with nodal vector $\bfe$. 
Hence, the last term in \eqref{mass-trick} can be absorbed by \eqref{monotone} in the error estimation, and Gronwall's inequality can be applied to yield an error estimate. 

To illustrate the idea clearly without complicating the problem, we focus on Dziuk's semidiscrete FEM (instead of fully discrete FEMs). 
As we shall see, high-order finite elements of polynomial degree $k\ge 6$ are needed to bound the nonlinear terms in the error estimation, though the computations in \cite{Dziuk90} seem to work well with lower-order finite elements.

In the next section, we present rigorous description of the matrix-vector formulation of Dziuk's semidiscrete FEM, and present the main theorem of this paper. The proof of the main theorem is presented in Section \ref{section:proof}.

\section{The main result}\label{section:result}

\subsection{Basic notions and notation}
\label{subsection: basic notions}

If $u(\cdot,t)$ is a function defined on the surface $\Gamma(t)=\Gamma[X(\cdot,t)]$ for $t\in[0,T]$, then the material derivative of $u$ with respect to the parametrization $X$ is defined as 
$$
\partial_t^{\bullet}u(x,t) = \frac \d{\d t} \,u(X(p,t),t) \quad\hbox{ for } \ x=X(p,t) \in \Gamma(t) .
$$
On any regular surface $\Gamma\subset\R^3$, for any function $u:\Gamma\to\R$ we denote by $\nabla_{\Gamma}u:\Gamma\to\R^3$ the surface tangential gradient as a 3-dimensional column vector. For a vector-valued function $u=(u_1,u_2,u_3)^T:\Gamma\to\R^3$, we define 
$\nabla_{\Gamma}u=
(\nabla_{\Gamma}u_1,
 \nabla_{\Gamma}u_2,
\nabla_{\Gamma}u_3)$, where each $\nabla_{\Gamma}u_j$ is a 3-dimensional column vector. 
We denote by $\nabla_{\Gamma} \cdot f$ the surface divergence of a vector field $f$ on $\Gamma$, and by $\laplace_{\Gamma} u=\nabla_{\Gamma}\cdot \nabla_{\Gamma}u$ the Laplace--Beltrami operator applied to $u$; see \cite{DeckelnickDE2005} or \cite[Appendix~A]{Ecker2012} for these notions. 

%We formulate Dziuk's semidiscrete FEM for mean curvature flow of closed surfaces as an evolving surface FEM in the matrix-vector notation of \cite{KLLP2017}, using high-order finite elements described in \cite{Demlow2009}. 

\subsection{Triangulation}
\label{subsection: triangulation}

The given smooth initial surface $\Gamma^0$ is partitioned into an admissible family of shape-regular and quasi-uniform triangulations $\mathcal{T}_h$ with finite elements of degree $k$ and mesh size $h$; see \cite{DziukElliott_ESFEM,Demlow2009} for the notion of admissible family of triangulations. For a fixed triangulation with mesh size $h$, we denote by $\bfx^0=(p_1,\dots,p_{\dof})^T$ the vector that collects all nodes $p_j\in\Gamma^0$, $j=1,\dots,\dof,$ in the triangulation of $\Gamma^0$ by finite elements of degree $k$. The nodal vector $\bfx^0 $ defines an approximate surface $\Gamma_h^0$ that interpolates $\Gamma^0$ at the nodes $p_j$. 

We consider the evolution of the nodal vector $\bfx=(x_1,\cdots,x_N)^T$ and denote its value at time $t$ by $\bfx(t)$, with initial condition $\bfx (0)=\bfx ^0$. By piecewise polynomial interpolation on the plane reference triangle that corresponds to every curved triangle of the triangulation, the nodal vector $\bfx(t)$ defines a closed surface denoted by 
$$\Gamma_h(t)=\Gamma_h[\bfx(t)].$$
There exists a unique finite element function $X_h(\cdot,t)$ of polynomial degree $k$ defined on the surface $\Gamma_h[\bfx^0]$ satisfying 
$$
X_h(p_j,t)=x_j(t)\quad\mbox{for}\quad j=1,\dots,\dof.
$$ 
This is the discrete flow map, which maps the initial surface $\Gamma_h[\bfx^0]$ to $\Gamma_h[\bfx(t)]$.  
If $w(\cdot,t)$ is a function defined on $\Gamma_h[\bfx(t)]$ for $t\in[0,T]$, then the material derivative $\partial_{t,h}^\bullet w$ on $\Gamma_h[\bfx(t)]$ with respect to the discrete flow map $X_h$ is defined by 
$$
\partial_{t,h}^{\bullet} w(x,t)
= \frac{\d}{\d t} w(X_h(p,t),t) \quad\mbox{for}\,\,\, x=X_h(p,t) \in \Gamma_h[\bfx(t)].
$$
%or equivalently,
%$$
%\partial_{t,h}^{\bullet} w
%= \bigg[\frac{\d}{\d t} w(X_h(\cdot ,t),t) \bigg] \circ X_h(\cdot,t)^{-1} \quad \mbox{on}\,\,\, \Gamma_h[\bfx(t)] .
%$$

\subsection{Finite element spaces}
\label{subsection: FE space}

The globally continuous finite element basis functions on the surface $\Gamma_h[\bfx]$ are denoted by 
$$
	\phi_i[\bfx]:\Gamma_h[\bfx]\to\R, \qquad i=1,\dotsc,\dof,
$$
which satisfy 
\begin{equation*}
	\phi_i[\bfx](x_j) = \delta_{ij} \quad  \text{ for all } i,j = 1,  \dotsc, \dof .
\end{equation*}
The pullback of $\phi_i[\bfx]$ from any curved triangle on $\Gamma_h[\bfx]$ to the reference plane triangle is a polynomial of degree $k$. 
It is known that the basis functions $\phi_j[\bfx(t)] $, $j=1,\dots,\dof$, have the following transport property (see \cite{DziukElliott_ESFEM})
\begin{align}\label{transport-phi}
\partial_{t,h}^{\bullet} \phi_j[\bfx(t)] = 0 \quad\mbox{on}\,\,\,\Gamma_h[\bfx(t)],\,\,\, j=1,\dots,N.
\end{align}
The finite element space on the surface $\Gamma_h[\bfx]$ is defined as 
\begin{equation*}
	S_h(\Gamma_h[\bfx]) =\spn\bigg\{ \sum_{j=1}^N v_j \phi_j[\bfx]: 
	v_j\in \R^3 \bigg\} , 
\end{equation*}
where each $v_j$ is a $3$-dimensional column vector. 
%
%For a finite element function $u_h=(u_h^{1},u_h^{(2)},u_h^{(3)})^T \in S_h(\Gamma_h[\bfx])$ with scalar components $u_h^{1},u_h^{(2)}$ and $u_h^{(3)}$, the tangential gradient $\nabla_{\Gamma_h[\bfx]}u_h$ is defined piecewisely on each curved triangle as a $3\times 3$ matrix-valued function
%$$
%\nabla_{\Gamma_h[\bfx]}u_h
%=(\nabla_{\Gamma_h[\bfx]}u_h^{1},\nabla_{\Gamma_h[\bfx]}u_h^{(2)},\nabla_{\Gamma_h[\bfx]}u_h^{(3)}) , 
%$$
%where each $\nabla_{\Gamma_h[\bfx]}u_h^{(j)}$ is a $3$-dimensional column vector. 

\subsection{Interpolated surface and lift onto the exact surface}
\label{subsection: lift}

In order to compare functions on the exact surface $\Gamma[X(\cdot,t)]$ with functions on the approximate surface $\Gamma_h[\bfx(t)]$, we introduce the interpolated surface $\Gamma_h[\bfx^*(t)]$, where $\bfx^*(t)$ denotes the nodal vector collecting the nodes $x_j^*(t)=X(p_j,t)$, $j=1,\dots,\dof$, moving along with the exact surface. 

For any point $x\in \Gamma_h[\bfx^*(t)]$ there exists a unique lifted point $x^l\in\Gamma[X(\cdot,t)]$, which was defined for linear and higher-order surface approximations in \cite{Dziuk88} and \cite{Demlow2009}, respectively. The lift operator is one-to-one and onto. As a result, any function $w$ on $\Gamma_h[\bfx^*]$ can be lifted to a function $w^l$ on $\Gamma$, defined as. 
$$
w^l(x^l)=w(x) . 
$$
Let $\delta_h(x)$ be the quotient between the continuous and interpolated surface measures, i.e., $\d A(x^l)=\delta_h(x)\d A_h(x)$. Then the following inequality holds (cf. \cite[Lemma 5.2]{Kovacs2018}):
\begin{align}\label{delta_h-1}
\|1-\delta_h\|_{L^\infty(\Gamma_h^*)}  \le
ch^{k+1} . 
\end{align}

%can be lifted to 
%on the approximate surface $\Gamma_h[\bfx(t)]$ with nodal values $w_j$, we denote by $\hat w_h$ the finite element function on the interpolated surface $\Gamma_h[\bfx^*(t)]$, defined by 
%$$
%\hat w_h  = \sum_{j=1}^N w_j \phi_j[\bfx^*(t)].
%$$
%This can be further lifted to a function on the exact surface $\Gamma[X(\cdot,t)]$ by using the lift operator $l$, which was introduced for linear and higher-order surface approximations in \cite{Dziuk88} and \cite{Demlow2009}, respectively. 
%%The exact regular surface $\Gamma$ can be represented, in some neighbourhood of the surface, by a smooth signed distance function $d : \R^3 \times [0,T] \to \R$, cf.~\cite[Section~2.1]{DziukElliott_ESFEM}, such that $\Gamma$ is the zero level set of~$d$ (i.e., $x\in \Gamma$ if and only if $d(x)=0$), with negative values of $d$ in the interior. 
%%Using this distance function,  the lift of a continuous function $\eta_h \colon \Gamma_h^* \to \R^m$ is defined as
%%$%\begin{equation*}
%%    \eta_{h}^{l}(y) := \eta_h(x)$ for $x\in\Gamma_h^*$,
%%%\end{equation*}
%%where for every $x\in \Gamma_h^*$ the point $y=y(x)\in\Gamma$ is uniquely defined via
%%$%\begin{equation*}
%%%\label{eq: lift defining equation}
%%    y = x - \nu(y) d(x).
%%$%\end{equation*}
%The composed lift operator $L$ from $S_h[\bfx(t)]$ to $H^1(\Gamma[X(\cdot,t)])$ is denoted by 
%$$
%w_h^L = (\widehat w_h)^l.
%$$

If we denote by $I_h:C(\Gamma[X(\cdot,t)])\rightarrow S_h(\Gamma_h[\bfx^*(t)])$ the standard Lagrange interpolation operator, then the lifted Lagrange interpolation $(I_hv)^l$ approximates a function $v$ on $\Gamma[X(\cdot,t)]$ with optimal-order accuracy (cf. \cite[Proposition 2.7]{Demlow2009}), i.e.,
\begin{align}\label{interpl-error}
&\|v-(I_hv)^l\|_{L^2(\Gamma[X(\cdot,t)])} \le C\|v\|_{H^{k+1}(\Gamma[X(\cdot,t)])} h^{k+1} .
\end{align}
We denote by $\hat n_h^*$ the normal vector on $\Gamma_h[\bfx^*(t)]$ and denote by $\hat n_h^{*,l}$ its lift onto $\Gamma[X(\cdot,t)]$. 
Then $\hat n_h^{*,l}$ approximates the normal vector $n$ on $\Gamma[X(\cdot,t)]$ with the following accuracy (cf. \cite[Propositions 2.3]{Demlow2009}): 
\begin{align}\label{normal-error}
&\|\hat n_h^{*,l} - n \|_{L^\infty(\Gamma[X(\cdot,t)])} \le C h^{k} . 
\end{align}

\subsection{The main result}
\label{subsection: main result}

The mean curvature flow equation \eqref{PDE-X-P} can be equivalently written as 
\begin{align}\label{PDE-X-P-id}
\left\{\begin{aligned}
&\partial_t^{\bullet}{\rm id} = \Delta_{\Gamma[X(\cdot,t)]}{\rm id}  &&\mbox{on}\,\,\, \Gamma[X(\cdot,t)],\,\,\,\mbox{for}\,\,\, t\in(0,T], \\[5pt] 
&\Gamma[X(\cdot,0)]= \Gamma^0 .
\end{aligned}
\right.
\end{align}
Correspondingly, the semidiscrete evolving surface FEM for mean curvature flow is to find a nodal vector $\bfx(t)$, $t\in[0,T]$, such that the corresponding approximate surface $\Gamma_h[\bfx(t)]$ satisfies the following weak form:
\begin{align}\label{semidiscrete-FEM}
\left\{\begin{aligned}
&\int_{\Gamma_h[\bfx (t)]} \partial_t^{\bullet}{\rm id} \cdot v_h
+\int_{\Gamma_h[\bfx (t)]} \nabla_{\Gamma_h[\bfx (t)]}{\rm id} : \nabla_{\Gamma_h[\bfx (t)]}v_h
=0 \\[5pt]
& \hspace{100pt}
\quad\forall\, v_h\in S_h(\Gamma_h[\bfx (t)]), \,\,\,t\in(0,T],\\[5pt]
&\bfx(0)=\bfx^0 .
\end{aligned}
\right.
\end{align}
%where $\partial_t^{\bullet}{\rm id}$ denotes the material derivative of the identity function ${\rm id}(x)= x$, defined by 
%$$
%(\partial_t^{\bullet}{\rm id})\circ X_h(p,t)
%=\partial_t X_h(p,t) \quad\forall\, p\in\Gamma_h[\bfx^0].
%$$

The mass matrix ${\bf M}(\bfx)$ and stiffness matrix ${\bf A}(\bfx )$ on the surface $\Gamma_h[\bfx ]$ consist of block components
\begin{align*}
{\bf M}_{ij}(\bfx ) = I_3 \int_{\Gamma_h[\bfx ]} \phi_i[\bfx ] \phi_j[\bfx ] 
\quad\mbox{and}\quad
{\bf A}_{ij}(\bfx ) = I_3  \int_{\Gamma_h[\bfx ]} \nabla_{\Gamma_h[\bfx ]}\phi_i[\bfx ] \cdot \nabla_{\Gamma_h[\bfx ]}\phi_j[\bfx ] ,
\end{align*}
for $i,j=1,\dots,\dof$, 
where $I_3$ is the $3\times 3$ identity matrix. 
Substituting 
$$
{\rm id} = \sum_{j=1}^{\dof} x_j(t)\phi_j[\bfx]
\quad\mbox{and}\quad
v_h=\phi_i[\bfx]
$$ 
into \eqref{semidiscrete-FEM} and using the transport property \eqref{transport-phi}, we obtain the following matrix-vector form of the semidiscrete FEM:
\begin{align}\label{FEM}
\left\{\begin{aligned}
&
{\bf M}(\bfx )\dot \bfx  + {\bf A}(\bfx )\bfx  = {\bf 0} ,
\\
&\bfx (0)=\bfx ^0 . 
\end{aligned}
\right.
\end{align}

The main result of this paper is the following theorem.\medskip

\begin{theorem}\label{MainTHM}
Consider the semidiscrete FEM \eqref{FEM} with finite elements of degree $k$. 
%Let $x_h(\cdot,t)$ be the identity function on the approximate surface $\Gamma[\bfx(t)]$, and denote its lift onto $\Gamma[X(\cdot,t)]$ by $x_h^L(\cdot,t)$.
Suppose that the mean curvature flow problem \eqref{PDE-X-P} admits an exact solution $X$ that is sufficiently smooth on the time interval $t\in[0,T]$, and that the flow map $X(\cdot,t):\Gamma^0\to \Gamma[X(\cdot,t)]\subset\R^3$ is non-degenerate so that $\Gamma[X(\cdot,t)]$ is a regular surface for every $t\in[0,T]$. 
%    We assume quasi-uniform admissible triangulations of the initial surface and initial values chosen by finite element interpolation of the initial data for $u$.
Then, there exists a constant $h_0 >0$ such that for all mesh sizes $h \leq h_0$ the following error bound holds when $k\ge 6$:
%\begin{align*}
%\max_{t\in[0,T]} \|\bfx-\bfx^*\|_{{\bf M}(\bfx^*)} &\leq Ch^r . 
%%\\
%\end{align*}
%Then 
\begin{align}
%\max_{t\in[0,T]} \|x_h^L(\cdot,t) - {\rm id}\|_{L^2(\Gamma[X(\cdot,t)])^3} &\leq ch^{k-1} , \\
&\max_{t\in[0,T]}\|X_h^l(\cdot,t) - X(\cdot,t)\|_{L^2(\Gamma^0)} \leq ch^{k-1} , 
\label{X_h-X} \\
&\max_{t\in[0,T]}\|\bfx(t) - \bfx^*(t)\|_{\infty} \leq ch^{k-2} ,
\label{x-xstar} 
\end{align}
%and also
%\begin{equation*}
%\begin{aligned}
% \|X_h^l(\cdot,t) - X(\cdot,t)\|_{H^1(\Gamma_0)^3} &\leq Ch^k ,
%\end{aligned}
%\end{equation*}
where $X_h^l(\cdot,t)$ denotes the lift of the approximate flow map $X_h(\cdot,t)$ from $\Gamma_h[\bfx^0]$ onto $\Gamma^0$, and the constant $c$ is independent of $h$. 
%, but may depend on the bounds of the $H^{r+1}$ norms of the solution $X$ of the mean curvature flow and on the length $T$ of the time interval.
\end{theorem}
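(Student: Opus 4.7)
The plan is to introduce the interpolated nodal vector $\bfx^*(t)$ with entries $x_j^*(t)=X(p_j,t)$, set $\bfe(t)=\bfx(t)-\bfx^*(t)$, and derive a Gronwall-type energy estimate for $\bfe$ using the two identities \eqref{monotone} and \eqref{mass-trick}. Inserting $\bfx^*$ into \eqref{FEM} defines a consistency defect $\bfd(t)$ by $\bfM(\bfx^*)\dot\bfx^* + \bfA(\bfx^*)\bfx^* = -\bfd$. Combining the interpolation estimate \eqref{interpl-error}, the surface measure bound \eqref{delta_h-1}, and the normal-vector estimate \eqref{normal-error} with the assumed smoothness of $X$, one shows that $\bfd$ is small in a suitable discrete dual norm, of size $O(h^{k+1})$. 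Subtracting the defect equation from \eqref{FEM} and testing the difference against $\bfe$ yields the error identity
\begin{equation*}
\bigl(\bfM(\bfx)\dot\bfx - \bfM(\bfx^*)\dot\bfx^*\bigr)\cdot\bfe + \bigl(\bfA(\bfx)\bfx - \bfA(\bfx^*)\bfx^*\bigr)\cdot\bfe = \bfd\cdot\bfe.
\end{equation*}

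To this identity I apply \eqref{monotone} to the second term on the left, producing a non-negative quantity that controls the normal component of $\nabla_{\Gamma_h^\theta}e_h^\theta$, and \eqref{mass-trick} to the first, choosing $\epsilon$ there small enough that the negative normal-gradient term it introduces is absorbed by the non-negative term from \eqref{monotone}. A Cauchy--Schwarz estimate on $\bfd\cdot\bfe$ then delivers an energy inequality of the form
\begin{equation*}
\tfrac12\tfrac{\d}{\d t}\|\bfe\|_{\bfM(\bfx)}^2 + \tfrac12\!\int_0^1\!\!\int_{\Gamma_h^\theta}|(\nabla_{\Gamma_h^\theta}e_h^\theta)\hat n_h^\theta|^2\,\d\theta \le C\|\bfe\|_{\bfM(\bfx)}^2 + Ch^{2(k+1)}.
\end{equation*}
Gronwall's inequality then gives $\|\bfe(t)\|_{\bfM(\bfx(t))}\le Ch^{k-1}$ once the orders lost in the duality pairing against $\bfe$ and in the transfer between the discrete surface and $\Gamma^0$ are accounted for, and a surface inverse inequality $\|v_h\|_{L^\infty}\le Ch^{-1}\|v_h\|_{L^2}$ upgrades this to the nodal bound \eqref{x-xstar}. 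Transporting the discrete $L^2$ estimate through the lift and combining with \eqref{interpl-error} yields \eqref{X_h-X}.

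The main obstacle, and the source of the degree restriction $k\ge 6$, is that the preceding argument silently presupposes that $\Gamma_h[\bfx(t)]$, $\Gamma_h[\bfx^*(t)]$, and the intermediate surfaces $\Gamma_h^\theta$ appearing in \eqref{monotone} are uniformly regular and metrically equivalent, so that the constants in \eqref{monotone}--\eqref{mass-trick} and in the defect bound remain uniform in time. The nonlinear remainders produced by expanding $\bfM(\bfx)-\bfM(\bfx^*)$ and $\bfA(\bfx)\bfx-\bfA(\bfx^*)\bfx^*$ involve the \emph{full} gradient of $e_h$ rather than only its normal component, so they cannot be absorbed by the right-hand side of \eqref{monotone} directly; instead they must be dominated by $C\|\bfe\|_{\bfM(\bfx)}^2$ via surface inverse inequalities such as $\|v_h\|_{W^{1,\infty}}\le Ch^{-2}\|v_h\|_{L^2}$, which is legitimate only while $\|\bfe\|_\infty$ remains small. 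The standard remedy is a bootstrap/continuity argument: set $t^\star = \sup\{t\in[0,T]\colon \|\bfe(s)\|_\infty\le h^{(k-2)/2}\text{ for all }s\in[0,t]\}$, run the energy estimate on $[0,t^\star]$, and use the resulting improved bound $\|\bfe\|_\infty\le Ch^{k-2}$ to conclude $t^\star = T$ for $h$ sufficiently small. Balancing the powers of $h^{-1}$ spent on the inverse inequalities against the $h^{k-2}$ nodal error shows that this bootstrap closes precisely when $k\ge 6$.
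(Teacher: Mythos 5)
Your architecture coincides with the paper's: interpolated nodal vector, defect equation, error equation tested with $\bfe$, the monotone identity \eqref{monotone} for the stiffness part, absorption of the $\epsilon$-term from \eqref{mass-trick}, Gronwall, a continuity/bootstrap argument to keep the meshes non-degenerate, and inverse inequalities plus the lift to pass from the discrete $L^2$ bound to \eqref{x-xstar} and \eqref{X_h-X}. However, there is a genuine gap at the heart of the argument: you invoke \eqref{mass-trick} as an available tool, but \eqref{mass-trick} is itself the main technical content of the proof, and the one mechanism you do sketch for the nonlinear remainders --- ``dominated by $C\|\bfe\|_{\bfM(\bfx)}^2$ via surface inverse inequalities'' --- fails for precisely the term that \eqref{mass-trick} is about. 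The mass-matrix difference term
$\bigl({\bf M}(\bfx)-{\bf M}(\bfx^*)\bigr)\dot\bfx^*\cdot\bfe=\int_0^1\int_{\Gamma_h^\theta}(\nabla_{\Gamma_h^\theta}\!\cdot e_h^\theta)\,v_h^{*,\theta}\!\cdot e_h^\theta\,\d\theta$
is \emph{linear} in $\nabla e_h$, so Cauchy--Schwarz plus an inverse inequality gives only $Ch^{-1}\|\bfe\|_{\bfM}^2$, whose Gronwall constant blows up; and Young's inequality instead produces $\epsilon\|\nabla e_h\|_{L^2}^2$ with the \emph{full} gradient, which the monotone term \eqref{monotone} cannot absorb since it controls only the normal component $(\nabla_{\Gamma_h^\theta}e_h^\theta)\hat n_h^\theta$. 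The missing idea is the paper's resolution: transfer this integral to the exact surface $\Gamma$, substitute $v=-Hn$, and integrate by parts so that the only surviving first-derivative term is $(\nabla_\Gamma e_h^{*,l})n$ --- the normal component --- which is then carried back to $\Gamma_h^\theta$ (using the geometric estimates \eqref{normal-error}, \eqref{delta_h-1} and a bound on $\hat n_h^\theta-\hat n_h^{*,\theta}$) and absorbed by the left-hand side. The terms that \emph{are} handled by inverse inequalities in the paper are the genuinely quadratic-in-$\nabla e_h$ remainders, and even those need the bootstrap bound $\|\nabla e_h\|_{L^\infty}\le ch^2$ to gain a factor $h^2$ before the inverse inequality spends $h^{-1}$, leaving a harmless $ch\|\bfe\|_{\bfM}^2$.

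Two secondary inaccuracies: the defect is not $O(h^{k+1})$; measured in the $\bfM(\bfx^*)$-norm it is only $O(h^{k-1})$ (the stiffness part $\int_{\Gamma_h^*}\nabla_{\Gamma_h^*}{\rm id}\cdot\nabla_{\Gamma_h^*}w_h-\int_\Gamma\nabla_\Gamma{\rm id}\cdot\nabla_\Gamma w_h^l$ costs two orders relative to $h^{k+1}$ after trading $\|\nabla w_h\|$ for $h^{-1}\|w_h\|$), which is what produces the rate $h^{k-1}$ in \eqref{X_h-X} directly rather than through losses ``in the duality pairing.'' And the bootstrap threshold matters quantitatively: the paper assumes $\|e_h\|_{L^2}\le h^4$ (not an $\ell^\infty$ bound of order $h^{(k-2)/2}$), because $h^4$ is exactly what is needed both for the $W^{1,\infty}$ smallness $\|e_h\|_{W^{1,\infty}}\le ch^2$ and for the discrete velocity bound $h^{-4}\|\bfe\|_{\bfM}\le c$; the condition $k\ge6$ then comes from requiring the proven rate $ch^{k-1}$ to beat $\tfrac12h^4$, i.e.\ $k-1>4$.
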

\medskip

%\begin{remark}
%{\upshape
%The smoothness and non-degeneracy of the flow map $X(\cdot,t):\Gamma^0\to\Gamma(t)$ guarantees that it is locally close to an invertible linear transformation with bounded gradient uniformly with respect to $h$. Hence, it preserves the admissibility of grids with sufficiently small mesh width $h\le h_0$. This guarantees that the triangulations determined by the nodes $x_j^*(t)=X(p_j,t)$ remain admissible uniformly for $t\in[0,T]$ and $h\le h_0$. We shall prove that the discrete flow map $X_h(\cdot,t):\Gamma_h^0\to\Gamma_h(t)$ is bounded in $W^{1,\infty}$ uniformly with respect to $h$. Since deformation is the gradient of position, the boundedness of $X_h(\cdot,t)$ in $W^{1,\infty}$ (uniformly with respect to $h$) guarantees that the mesh on the approximate surface will not be degenerate under the assumptions of the theorem. 
%}
%\end{remark}

\section{Proof of Theorem \ref{MainTHM}}\label{section:proof}

Throughout, we denote by $c$ a generic positive constant that takes different values on different occurrences.

 \subsection{Preliminaries}

We denote by $\bfe=(e_1,\cdots,e_\dof)^T=\bfx-\bfx^*$ the vector consisting of the errors of numerical solutions at the nodes, and denote by 
$$
	e_h =\sum_{j=1}^\dof e_j \phi_j[\bfx^*] 
$$
the finite element error function on surface $\Gamma_h[\bfx^*]$. 

Let $t^*\in[0,T]$ be the maximal time such that the solution of \eqref{FEM} exists and the following inequality hold (with coefficient $1$):
\begin{equation}
\label{eq:assumed bounds - 2}
\begin{aligned}
%&\| e_h(\cdot,t)\|_{W^{1,\infty}(\Gamma_h[\bfx^*(t)])} \leq h^{\frac12}  &&\textrm{ for } \quad t\in[0,t^*] ,\\
&\| e_h(\cdot,t)\|_{L^2(\Gamma_h[\bfx^*(t)])} \leq h^{4}  &&\textrm{ for } \quad t\in[0,t^*].
\end{aligned}
\end{equation}
Since $e_h(\cdot,0)=0$, it follows that $t^*>0$, as the solution $\bfx$ of the ordinary differential equation \eqref{FEM} exists locally in time and is continuous in time. In the following, we prove the stated error bounds for $t\in[0,t^*]$. Then we show that  $t^*$ actually coincides with $T$.

The smoothness and non-degeneracy of the flow map $X(\cdot,t):\Gamma^0\to\Gamma(t)$ guarantees that it is locally close to an invertible linear transformation with bounded gradient uniformly with respect to $h$. Hence, it preserves the admissibility of grids with sufficiently small mesh width $h\le h_0$. This guarantees that the triangulations determined by the nodes $x_j^*(t)=X(p_j,t)$ remain admissible uniformly for $t\in[0,T]$ and $h\le h_0$, and the interpolated flow map $X_h^*(\cdot,t)$ and its inverse are bounded in $W^{1,\infty}(\Gamma_h[\bfx^0])$ (uniformly in $h$). 
Then \eqref{eq:assumed bounds - 2} implies, through inverse inequality, 
\begin{equation}
\label{eq:assumed bounds - W1infty}
\begin{aligned}
&\| e_h(\cdot,t)\|_{W^{1,\infty}(\Gamma_h[\bfx^*(t)])} \leq ch^2  &&\textrm{ for } \quad t\in[0,t^*] .
\end{aligned}
\end{equation}
\begin{remark}\upshape 
The powers of $h$ in \eqref{eq:assumed bounds - 2} and \eqref{eq:assumed bounds - W1infty} are needed to bound the nonlinear terms in the error estimation. For example, \eqref{eq:assumed bounds - 2} is used to prove the $W^{1,\infty}$ boundedness of the numerical velocity in \eqref{div-vh}, which is used in bounding the nonlinear term in \eqref{1st-term}; inequality \eqref{eq:assumed bounds - W1infty} is used in estimating the nonlinear terms in \eqref{estimate-J1}, \eqref{estimate-J6} and \eqref{estimate-J7}. 
The powers of $h$ in \eqref{eq:assumed bounds - 2} and \eqref{eq:assumed bounds - W1infty} require high-order finite elements of degree $k\ge 6$ in view of our error estimate \eqref{X_h-X} --- the power of $h$ in \eqref{X_h-X} should be strictly bigger than $4$ in order to absorb the constant $c$ in the derivation of  \eqref{eq:assumed bounds - 2}. This is done in \eqref{conclude-h4} for sufficiently small $h$.  
%This is the reason we require the degree of finite elements to be sufficiently high (i.e., $k\ge 6$) in the error analysis. 
\end{remark}

Since $X_h(\cdot,t)=X_h^*(\cdot,t)+e_h(\cdot,t)\circ X_h^*(\cdot,t)$ and $X_h^*(\cdot,t)$ is bounded in $W^{1,\infty}(\Gamma_h[\bfx^0])$ (uniformly in $h$), the estimate above guarantees that the approximate flow map $X_h(\cdot,t):\Gamma_h[\bfx^0]\to\Gamma_h[\bfx(t)]$ and its inverse are bounded in $W^{1,\infty}(\Gamma_h[\bfx^0])$ uniformly with respect to $h$. Since deformation is the gradient of position, the boundedness of $X_h(\cdot,t)$ in $W^{1,\infty}(\Gamma_h[\bfx^0])$ (uniformly with respect to $h$) guarantees that the mesh on the approximate surface is not degenerate. Moreover, we can define an intermediate surface 
\begin{align}\label{def-x-theta}
\Gamma_h^{\theta}:=\Gamma_h[\bfx^{\theta}]
\quad\mbox{with nodal vector\,\, $\bfx^{\theta}=(1-\theta)\bfx^*+\theta\bfx.$} 
\end{align}
The estimate \eqref{eq:assumed bounds - W1infty} also guarantees that the intermediate surface $\Gamma_h^{\theta}$ is well defined with non-degenerate mesh, with 
$$
\Gamma_h^{1}=\Gamma_h[\bfx]
\quad\mbox{and}\quad
\Gamma_h^0=\Gamma_h^*=\Gamma_h[\bfx^*].
$$
The argument above is standard and was used in \cite{Kovacs-Li-Lubich-2019}. 

%Since the exact surface functions $X,v,u$ are assumed smooth, the associated finite element functions $x_h^*(\cdot,t),v_h^*(\cdot,t),u_h^*(\cdot,t)$ on the interpolated surface 
%$\Gamma_h[\bfx^*(t)]$ have $W^{1,\infty}$ norms that are bounded independently of $h$, for all $t\in[0,T]$. 
%The bounds~\eqref{eq:assumed bounds - 2} together with Lemma~\ref{lemma:theta-independence}
%then imply that the $W^{1,\infty}$ norms of the ESFEM functions $x_h(\cdot,t),v_h(\cdot,t),u_h(\cdot,t)$ on the discrete surface $\Gamma_h[\bfx(t)]$ are also bounded independently of $h$ and $t\in[0,t^*]$, and so are their lifts to the interpolated surface $\Gamma_h[\bfx^*(t)]$. In particular,  it will be important that the discrete velocity $v_h$ with nodal vector $\bfv=\dot\bfx$ satisfies \eqref{vh-bound}.

%In \cite[Section 4]{KLLP2017} we proved some technical results relating different finite element surfaces, which we recapitulate here. 
%We use the following setting.

%Let $\Gamma[\bfx]$ and $\Gamma[\bfx^*]$ be the approximate surface computed by numerical method and the interpolated surface, respectively. 
For any nodal vector $\bfw=(w_1,\dots,w_\dof)^T$ with $w_j\in\R^3$, we define a finite element function 
$$
w_h^{\theta}= \sum_{j=1}^N w_j \phi_j[\bfx^{\theta}] \in S_h(\Gamma_h^\theta)
$$
on the intermediate surface $\Gamma_h^{\theta}$. In particular, 
$$e_h^{\theta}=\sum_{j=1}^\dof e_j \phi_j[\bfx^{\theta}] $$ 
is the finite element error function on the surface $\Gamma_h^{\theta}$. As $\theta$ changes from $0$ to $1$, the surface $\Gamma_h^{\theta}$ moves with velocity $e_h^\theta=\sum_{j=1}^{\dof} e_j\phi_j[\bfx^\theta]$ (with respect to $\theta$). 
When $\theta=0$ we simply denote 
\begin{align}\label{def-eh-star-0}
e_h^*=e_h^{0} , 
\end{align}
which is a function on $\Gamma_h[\bfx^*]$. The lift of $e_h^*\in S_h[\Gamma_h^*]$ onto $\Gamma$ is denoted by $e_h^{*,l}$. 

On the intermediate surface $\Gamma_h^{\theta}$ we define the following discrete $L^2$ norm and $H^1$ semi-norm:
\begin{align} \label{M-L2} 
	&  \|\bfw\|_{\bfM(\bfx^{\theta})}^{2} = \bfw^T \bfM(\bfx^{\theta}) \bfw = \|w_h^{\theta}\|_{L^2(\Gamma_h^{\theta})}^2 , \\
	\label{A-H1}
	&  \|\bfw\|_{\bfA(\bfx^{\theta})}^{2} = \bfw^T \bfA(\bfx^{\theta}) \bfw = \|\nb_{\Gamma_h^{\theta}} w_h^{\theta}\|_{L^2(\Gamma_h^{\theta})}^2 .
%	\label{K-H1}
%	&  \|\bfw\|_{\bfK(\bfx^{\theta})}^{2} = \bfw^T \bfK(\bfx^{\theta}) \bfw = \|w_h^{\theta}\|_{H^1(\Gamma_h^{\theta})}^2 ,
\end{align}
%with $\bfK(\bfx^{\theta}):=\bfM(\bfx^{\theta})+\bfA(\bfx^{\theta})$.
%\medskip

%The following formulae were proved in \cite[Lemma 4.1]{KLLP2017}.\medskip

\begin{lemma}
\label{lemma:matrix differences}   In the above setting, the following identities hold:
	\begin{align}
	\label{eq:matrix difference M}
&		\bfw^T (\bfM(\bfx)-\bfM(\bfx^*)) \bfz =\ \int_0^1 \int_{\Gamma_h^\theta} w_h^\theta (\nabla_{\Gamma_h^\theta} \cdot e_h^\theta) z_h^\theta \; \d\theta, \\[10pt]
%	\label{eq:matrix difference A}
%		\bfw^T (\bfA(\bfx)-\bfA(\bfx^*)) \bfz =&\ \int_0^1 \int_{\Gamma_h^\theta} \nb_{\Gamma_h^\theta} w_h^\theta \cdot (D_{\Gamma_h^\theta} e_h^\theta)\nb_{\Gamma_h^\theta}  z_h^\theta \; \d\theta ,\\
\label{eq:matrix difference Ax}
&{\bf w}^T\big({\bf A}(\bfx )\bfx -{\bf A}(\bfx ^*)\bfx ^*\big) \notag \\
&=
\int_0^1 \int_{\Gamma_h^\theta} 
\bigg( \nabla_{\Gamma_h^\theta}  w_h^\theta : (D_{\Gamma_h^\theta} e_h^\theta)\nabla_{\Gamma_h^\theta} {\rm id} 
+ \nabla_{\Gamma_h^\theta}  w_h^\theta : \nabla_{\Gamma_h^\theta}  e_h^\theta \bigg) \; \d \theta ,
	\end{align}
	where $A:B ={\rm tr}(A^TB) $ for two $3\times 3$ matrices $A$ and $B$, and 
$$
D_{\Gamma_h^\theta} e_h^\theta =  \textnormal{tr}(E^\theta) I_3 - (E^\theta+(E^\theta)^T) 
	\quad\mbox{with}\quad
E^\theta=\nabla_{\Gamma_h^\theta} e_h^\theta .
$$
\end{lemma}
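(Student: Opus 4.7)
The plan is to reduce both identities to an application of the fundamental theorem of calculus along the linear path $\bfx^\theta=(1-\theta)\bfx^*+\theta\bfx$, followed by the Reynolds transport formula on the moving surface $\Gamma_h^\theta=\Gamma_h[\bfx^\theta]$. The point is that $\bfx^\theta$ moves in $\theta$ with constant nodal ``velocity'' $\bfe=\bfx-\bfx^*$, so that the corresponding discrete velocity of the surface is exactly $e_h^\theta=\sum_j e_j\phi_j[\bfx^\theta]$. Moreover, since the nodal coefficients of $w_h^\theta,z_h^\theta$ do not depend on $\theta$, the transport property \eqref{transport-phi} (applied with $\theta$ in place of $t$) yields $\partial_\theta^\bullet w_h^\theta=\partial_\theta^\bullet z_h^\theta=0$.

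For identity \eqref{eq:matrix difference M} I would start from the defining representation $\bfw^T\bfM(\bfx^\theta)\bfz=\int_{\Gamma_h^\theta}w_h^\theta\cdot z_h^\theta$ and write $\bfw^T(\bfM(\bfx)-\bfM(\bfx^*))\bfz=\int_0^1\frac{d}{d\theta}\int_{\Gamma_h^\theta}w_h^\theta\cdot z_h^\theta\,d\theta$. Applying the Reynolds transport formula with velocity $e_h^\theta$ and using that the material derivatives of $w_h^\theta$ and $z_h^\theta$ vanish, only the area-change contribution survives:
\[
\frac{d}{d\theta}\int_{\Gamma_h^\theta}w_h^\theta\cdot z_h^\theta=\int_{\Gamma_h^\theta}w_h^\theta\cdot z_h^\theta\,(\nabla_{\Gamma_h^\theta}\cdot e_h^\theta).
\]
Integrating in $\theta$ yields \eqref{eq:matrix difference M}. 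This part is routine.

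For identity \eqref{eq:matrix difference Ax} I would use $\bfw^T\bfA(\bfx^\theta)\bfx^\theta=\int_{\Gamma_h^\theta}\nabla_{\Gamma_h^\theta}w_h^\theta:\nabla_{\Gamma_h^\theta}\mathrm{id}$ (since $\mathrm{id}=\sum_j x_j^\theta\phi_j[\bfx^\theta]$ on $\Gamma_h^\theta$) and differentiate under the transport formula. Three ingredients enter: (a) $\partial_\theta^\bullet w_h^\theta=0$ by the transport property; (b) $\partial_\theta^\bullet\mathrm{id}=e_h^\theta$, as the velocity of the moving surface; and (c) the standard commutator formula for the material derivative of a tangential gradient,
\[
\partial_\theta^\bullet(\nabla_{\Gamma_h^\theta}u)=\nabla_{\Gamma_h^\theta}(\partial_\theta^\bullet u)-(\nabla_{\Gamma_h^\theta}e_h^\theta)^T\nabla_{\Gamma_h^\theta}u,
\]
valid elementwise on the piecewise-polynomial surface. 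Applying (c) to $w_h^\theta$ contributes $-E^T\nabla_{\Gamma_h^\theta}w_h^\theta$ in the first slot, applying it to $\mathrm{id}$ contributes $\nabla_{\Gamma_h^\theta}e_h^\theta-E^T\nabla_{\Gamma_h^\theta}\mathrm{id}$ in the second slot, and the surface-area factor contributes $\mathrm{tr}(E)\,\nabla w_h^\theta:\nabla\mathrm{id}$. Using the Frobenius identity $(E^T A):B=A:(E B)$, the three terms involving $\nabla\mathrm{id}$ collapse exactly to $\nabla w_h^\theta:(\mathrm{tr}(E)I_3-E-E^T)\nabla\mathrm{id}=\nabla w_h^\theta:(D_{\Gamma_h^\theta}e_h^\theta)\nabla\mathrm{id}$, while the remaining term $\nabla w_h^\theta:\nabla e_h^\theta$ is precisely the second integrand in \eqref{eq:matrix difference Ax}.

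The main obstacle is justifying the commutator formula (c) on curved isoparametric elements of degree $k$. I expect to handle it by pulling back to the flat reference element, where the intrinsic $\theta$-derivative commutes with reference differentiation, and then pushing forward, tracking the $\theta$-dependence only through the Jacobian of the elementwise map determined by $\bfx^\theta$; the correction term $-(\nabla_{\Gamma_h^\theta}e_h^\theta)^T\nabla_{\Gamma_h^\theta}u$ then arises as the $\theta$-derivative of the (elementwise) inverse metric acting on the reference gradient. The remainder of the computation is algebraic bookkeeping with the Frobenius inner product.
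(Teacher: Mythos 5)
Your proposal is correct and follows essentially the same route as the paper: the fundamental theorem of calculus along $\theta\mapsto\bfx^\theta$ combined with the transport (Leibniz) formula on $\Gamma_h^\theta$ moving with velocity $e_h^\theta$, using $\partial_\theta^\bullet w_h^\theta=\partial_\theta^\bullet z_h^\theta=0$ and $\partial_\theta^\bullet{\rm id}=e_h^\theta$; the only difference is that the paper cites the composite formula (2.11) of Dziuk--Elliott for $\tfrac{\d}{\d\theta}\int_{\Gamma_h^\theta}\nabla_{\Gamma_h^\theta}w_h^\theta:\nabla_{\Gamma_h^\theta}{\rm id}$ in one step, whereas you re-derive it from Reynolds transport plus the gradient--material-derivative commutator. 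Your statement of that commutator omits the normal correction $\hat n_h^\theta(\hat n_h^\theta)^T(\nabla_{\Gamma_h^\theta}e_h^\theta)^T\nabla_{\Gamma_h^\theta}u$ present in the version the paper uses, but this is harmless here, since that term vanishes under the Frobenius pairing with a tangential gradient and the transpose ambiguity in the tangential part cancels because the contributions from the two slots symmetrize to $E^\theta+(E^\theta)^T$.
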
 
\begin{proof}
Identity \eqref{eq:matrix difference M} was proved in \cite[Lemma 4.1]{KLLP2017}. Identity \eqref{eq:matrix difference Ax} can be proved as follows. 

Let ${\bf w}=(w_1,\cdots,w_\dof)^T$ and denote $w^\theta=\sum_{j=1}^{\dof} w_j\phi_j[\bfx^\theta]$ to be the finite element function on the surface $\Gamma_h^\theta$ with nodal vector ${\bf w}$, where $\bfx^\theta$ is defined in \eqref{def-x-theta}. As $\theta$ changes from $0$ to $1$, the surface $\Gamma_h^\theta$ moves with velocity $e_h^\theta=\sum_{j=1}^{\dof} e_j\phi_j[\bfx^\theta]$ with respect to $\theta$ and $\partial_\theta^{\bullet}w^\theta=0$. By using the fundamental theorem of calculus and the Leibniz formula, we have
\begin{align*}
 &\hspace{-8pt} {\bf w}^T\big({\bf A}(\bfx )\bfx -{\bf A}(\bfx ^*)\bfx ^*\big) \\
 =&\ \int_{\Gamma_h^1} \!\!\! \nabla_{\Gamma_h^1} w_h^1 : \nabla_{\Gamma_h^1} {\rm id}  
 - \int_{\Gamma_h^0} \!\!\! \nabla_{\Gamma_h^0} w_h^0 : \nabla_{\Gamma_h^0} {\rm id}  \\
    =& \int_0^1 \bigg(\frac{\d}{\d \theta} \int_{\Gamma_h^\theta} \nabla_{\Gamma_h^\theta} w_h^\theta  : \nabla_{\Gamma_h^\theta} {\rm id} \bigg) \d\theta \\
=&\ \int_0^1 \int_{\Gamma_h^\theta} 
\bigg( (\nabla_{\Gamma_h^\theta}  \cdot e_h^\theta) \nabla_{\Gamma_h^\theta}  w_h^\theta :\nabla_{\Gamma_h^\theta} {\rm id}  
- \nabla_{\Gamma_h^\theta}  w_h^\theta : (\nabla_{\Gamma_h^\theta} e_h^\theta+(\nabla_{\Gamma_h^\theta} e_h^\theta)^T)\nabla_{\Gamma_h^\theta} {\rm id} \\
&\hspace{50pt}
+\nabla_{\Gamma_h^\theta} \partial_\theta^{\bullet} w_h^\theta : \nabla_{\Gamma_h^\theta} {\rm id}  + \nabla_{\Gamma_h^\theta}  w_h^\theta : \nabla_{\Gamma_h^\theta} \partial_\theta^{\bullet}{\rm id}  \bigg) \; \d \theta ,
\end{align*}
where the last equality was essentially proved in \cite[eq (2.11)]{DziukElliott_ESFEM}. By using the notation $E^\theta$ and $D_{\Gamma_h^\theta} e_h^\theta$ in Lemma \ref{lemma:matrix differences}, and using the identities 
$$
\partial_\theta^{\bullet}w^\theta=0
\quad\mbox{and}\quad \partial_\theta^{\bullet}{\rm id}=e_h^\theta ,
$$ 
we obtain \eqref{eq:matrix difference Ax}. 
\hfill\end{proof}
%in the same way as \eqref{eq:matrix difference A}. 

%	
%	(ii) \label{lemma:theta-independence}
%	If
%	%	\begin{equation}
%	%	\label{e-inf-bound}
%	$\| \nabla_{\Gamma_h^\theta} \cdot e_h^\theta \|_{L^\infty(\Gamma_h^\theta)} \leq \mu $
%	and %\andquad 
%	$\| D_{\Gamma_h^\theta} e_h^\theta \|_{L^\infty(\Gamma_h^\theta)} \leq \rho \ebk$
%	%	\end{equation} 
%	for $0 \leq \theta \leq 1$,  then the finite element function $w_h^\theta=\sum_{j=1}^N w_j \phi_j[\bfy+\theta\bfe]$ on $\Gamma_h^\theta=\Gamma_h[\bfy+\theta\bfe]$ is bounded by
%$$
%\| w_h^\theta \|_{L^2(\Gamma_h^\theta)} \le e^{\theta\mu/2}\, \| w_h^0 \|_{L^2(\Gamma_h^0)} \quad\text{and}\quad
%\bcl
%\| \nabla_{\Gamma_h^\theta} w_h^\theta \|_{L^2(\Gamma_h^\theta)} \le e^{\theta\rho/2}\, \| \nabla_{\Gamma_h^0} w_h^0 \|_{L^2(\Gamma_h^0)}.
%\ecl
%	$$
%	%\| \bfw \|_{\bfM(\bfy+\theta\bfe)}  \le e^{\mu/2} \, \| \bfw \|_{\bfM(\bfy)}
%%	$
%%	and %\qquad \textnormal{and} \qquad 
%%	$
%	\andquad
%	\| \bfw \|_{\bfA(\bfy+\theta\bfe)}  \le e^{\rho \ebk/2} \, \| \bfw \|_{\bfA(\bfy)} .	
%	$$
The following lemma combines \cite[Lemmas 4.2 and 4.3]{KLLP2017} and \cite[Lemma 7.3]{Kovacs-Li-Lubich-2019}.\medskip

\begin{lemma} \label{lemma:theta-independence} 
In the above setting, if 
	$$%\label{e-inf-bound} 
	\| \nabla_{\Gamma_h[\bfx^*]} e_h^* \|_{L^\infty(\Gamma_h[\bfx^*])} \le \tfrac12,
	$$
then for $0\le\theta\le 1$ and $1\le p \le \infty$ the finite element function 
	$$
	w_h^{\theta} =\sum_{j=1}^N w_j \phi_j[\bfx^{\theta}]\quad\mbox{on}\quad \Gamma_h^{\theta}
	$$ 
satisfies the following norm equivalence:
	\begin{align*}
		&\| w_h^{\theta} \|_{L^p(\Gamma_h^{\theta})} \leq c_p \, \|w_h^0 \|_{L^p(\Gamma_h[\bfx^*])} , 
		\\
		&\| \nabla_{\Gamma_h^{\theta}} w_h^{\theta} \|_{L^p(\Gamma_h^{\theta})} \le c_p \, \| \nabla_{\Gamma_h[\bfx^*]} w_h^0 \|_{L^p(\Gamma_h[\bfx^*])} , 
		%\for 1\le p \le \infty,
	\end{align*}
	where $c_p$ is an constant independent of $0\le \theta\le 1$ and $h$, with $c_\infty=2$.
\end{lemma}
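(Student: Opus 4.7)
The plan is to transport every quantity on $\Gamma_h^\theta$ back to the reference surface $\Gamma_h[\bfx^*]$ via the piecewise-polynomial deformation
\[
X^\theta \colon \Gamma_h[\bfx^*] \to \Gamma_h^\theta, \qquad X^\theta(x) = x + \theta\, e_h^*(x),
\]
which is well defined on each curved element because the element parametrizations depend affinely on their nodal vectors and $\bfx^\theta = \bfx^* + \theta\bfe$. The Lagrange property then yields $\phi_j[\bfx^\theta]\circ X^\theta = \phi_j[\bfx^*]$ element by element, and therefore the crucial pointwise identity
\[
w_h^\theta \circ X^\theta = w_h^0 \qquad \text{on}\ \Gamma_h[\bfx^*] .
\]
Once this identity is in hand, both norm equivalences reduce to uniform bounds on the Jacobian and the tangential differential of $X^\theta$.

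For the $L^p$ estimate I would pull back element-wise to the plane reference triangle $\widehat K$, on which $w_h^\theta\circ F_K^\theta$ is a fixed polynomial independent of $\theta$. The tangential differential of $X^\theta$ at $x\in\Gamma_h[\bfx^*]$ equals the identity on $T_x\Gamma_h[\bfx^*]$ plus $\theta\, \nabla_{\Gamma_h[\bfx^*]} e_h^*(x)$ restricted to the same tangent plane, so the hypothesis $\|\nabla_{\Gamma_h[\bfx^*]} e_h^*\|_{L^\infty}\le\tfrac12$ confines its singular values to $[\tfrac12,\tfrac32]$ uniformly in $\theta\in[0,1]$ and in $h$. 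Consequently the area-element quotient $J_\theta = \mathrm{d}A_{\Gamma_h^\theta}/\mathrm{d}A_{\Gamma_h[\bfx^*]}$ lies in a fixed positive interval, and a change of variables immediately yields the desired $L^p$ equivalence with an $h$-independent constant; for $p=\infty$ the pointwise identity above gives the bound with constant $1$ for the function itself.

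For the gradient estimate the chain rule gives, on each element,
\[
(\nabla_{\Gamma_h^\theta} w_h^\theta)\circ X^\theta = \bigl(DX^\theta|_{T\Gamma_h[\bfx^*]}\bigr)^{-\tp} \nabla_{\Gamma_h[\bfx^*]} w_h^0 ,
\]
where the inverse transpose is taken on tangent planes; combining this with the Jacobian bound from the previous step delivers the $L^p$ equivalence of tangential gradients. The main obstacle, where the bookkeeping is most delicate, is the tangent-plane rotation: the tangent spaces of $\Gamma_h^\theta$ turn with $\theta$, so one must verify that both the orthogonal projection onto the new tangent plane and the pull-back through $(DX^\theta)^{-\tp}$ remain uniformly controlled by $\|\nabla_{\Gamma_h[\bfx^*]} e_h^*\|_{L^\infty}$. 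This is exactly where the smallness assumption $\le \tfrac12$ is essential, and a careful accounting of the worst-case tangent-plane deflection produces the explicit constant $c_\infty = 2$ in the $L^\infty$ gradient estimate.
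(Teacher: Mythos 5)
Your proposal is correct, and it is essentially the standard argument: the paper itself gives no proof of this lemma but simply cites \cite[Lemmas 4.2 and 4.3]{KLLP2017} and \cite[Lemma 7.3]{Kovacs-Li-Lubich-2019}, whose proofs proceed exactly as you describe — transport via $X^\theta=\mathrm{id}+\theta e_h^*$, the identity $w_h^\theta\circ X^\theta=w_h^0$ from the Lagrange basis, and uniform two-sided bounds on the singular values of $DX^\theta|_{T\Gamma_h[\bfx^*]}$ (in $[\tfrac12,\tfrac32]$ under the hypothesis), which control both the area-element quotient and the pulled-back tangential gradient and yield $c_\infty=1/(1-\tfrac12)=2$.
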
 \medskip

For sufficiently small $h$, \eqref{eq:assumed bounds - W1infty} guarantees that 
$$
\| \nabla_{\Gamma_h[\bfx^*]} e_h^0 \|_{L^\infty(\Gamma_h[\bfx^*])}=\| \nabla_{\Gamma_h[\bfx^*]} e_h^* \|_{L^\infty(\Gamma_h[\bfx^*])}\le \frac14 . 
$$
Then Lemma \ref{lemma:theta-independence} implies that 
\begin{equation}\label{e-theta}
\| \nabla_{\Gamma_h^\theta} e_h^\theta \|_{L^\infty(\Gamma_h^\theta)} \le \frac12, \qquad 0\le\theta\le 1 . 
\end{equation}
By using this result in Lemma \ref{lemma:matrix differences}, together with the definition of the discrete $L^2$ and $H^1$ norms in \eqref{M-L2}--\eqref{A-H1}, we obtain the following result (as in (7.7) of \cite{Kovacs-Li-Lubich-2019}): 
\begin{equation}
\label{norm-equiv}
	\begin{aligned}
		&\text{The norms $\|\cdot\|_{\bfM(\bfx^\theta)}$ are $h$-uniformly equivalent for $0\le\theta\le 1$,}
		\\
		&\text{and so are the norms $\|\cdot\|_{\bfA(\bfx^\theta)}$.}
	\end{aligned}
\end{equation}

\subsection{The monotone structure}

Note that the interpolated nodal vector $\bfx^*$ satisfies equation \eqref{FEM} up to some defect ${\bf d}$, i.e., 
\begin{align}\label{FEM-x-star}
&{\bf M}(\bfx^*)\dot \bfx^* + {\bf A}(\bfx^*)\bfx^*
= {\bf M}(\bfx^*) {\bf d} ,
\end{align}
where the defect satisfies the following estimate (to be proved in Section \ref{section:defect}):
\begin{align}\label{defect-estimate}
\|{\bf d}\|_{\bfM(\bfx^*)} \le Ch^{k-1}. 
\end{align}
Subtracting \eqref{FEM-x-star} from \eqref{FEM}, we obtain the error equation
\begin{align}\label{FEM-Error-Eq}
&{\bf M}(\bfx )\dot \bfe 
+ {\bf A}(\bfx )\bfx  -{\bf A}(\bfx ^*)\bfx ^*
= - ({\bf M}(\bfx )-{\bf M}(\bfx ^*))\dot \bfx ^*  
- {\bf M}(\bfx^*) {\bf d} . 
\end{align} 

By using Lemma \ref{lemma:matrix differences}, we have 
\begin{align}\label{idea-surface-1}
&\big({\bf A}(\bfx )\bfx -{\bf A}(\bfx ^*)\bfx ^*\big)\cdot(\bfx -\bfx ^*) \notag \\
&=
\int_0^1 \int_{\Gamma_h^\theta} 
\bigg( \nabla_{\Gamma_h^\theta}  e_h^\theta : (D_{\Gamma_h^\theta} e_h^\theta)\nabla_{\Gamma_h^\theta} {\rm id} 
+ \nabla_{\Gamma_h^\theta}  e_h^\theta : \nabla_{\Gamma_h^\theta}  e_h^\theta \bigg) \; \d \theta \notag \\
&=
\int_0^1 \int_{\Gamma_h^\theta} 
\bigg( \nabla_{\Gamma_h^\theta} e_h^\theta : [ (D_{\Gamma_h^\theta} e_h^\theta)P^\theta  + \nabla_{\Gamma_h^\theta}  e_h^\theta ] \bigg) \; \d \theta  ,
\end{align}
where $D_{\Gamma_h^\theta} e_h^\theta =  \textnormal{tr}(E^\theta) I_3 - (E^\theta+(E^\theta)^T)$ is defined in Lemma \ref{lemma:matrix differences}, and we have used the identity 
$$
\nabla_{\Gamma_h^\theta} {\rm id} = I_3 - \hat n_h^\theta (\hat n_h^\theta)^T =: P^\theta , 
$$
with $\hat n_h^\theta$ denoting the unit normal vector on $\Gamma_h^\theta$ (thus $\hat n_h^\theta\notin S_h(\Gamma_h^\theta)$). 
%Hence, $P^\theta$ is the projection matrix onto the plane tangent to the surface $\Gamma_h^\theta$ (piecewisely on each curved triangle). 

%Let $E=\nabla_{\Gamma_h^\theta} e_h^\theta $ as in Lemma \ref{lemma:matrix differences} (omitting the superscript $\theta$ from $E$), and n
Note that $P^\theta$ is a symmetric projection matrix satisfying
$$
P^\theta E^\theta = E^\theta,\,\,\,
(E^\theta)^TP^\theta = (E^\theta)^T
\,\,\,\mbox{and}\,\,\, 
{\rm tr}(P^\theta (E^\theta)^T)={\rm tr}((E^\theta)^TP^\theta)={\rm tr}(P^\theta E^\theta)={\rm tr}(E^\theta) .
$$
By using the properties above and the expression of $D_{\Gamma_h^\theta} e_h^\theta$, we furthermore reduce \eqref{idea-surface-1} to 
\begin{align}\label{idea-surface-2}
&\big({\bf A}(\bfx )\bfx -{\bf A}(\bfx ^*)\bfx ^*\big)\cdot(\bfx -\bfx ^*) \notag \\
&=
\int_0^1 \int_{\Gamma_h^\theta} 
\bigg[ {\rm tr} \Big( (E^\theta)^T ({\rm tr}(E^\theta) I_3  -  E^\theta - (E^\theta)^T )P^\theta \Big)+ {\rm tr}((E^\theta)^TE^\theta)\bigg] \; \d \theta \notag \\
&=
\int_0^1 \int_{\Gamma_h^\theta} 
\bigg[ {\rm tr} ({\rm tr}(E^\theta) (E^\theta)^TP^\theta  -  (E^\theta)^TE^\theta P^\theta - (E^\theta)^T(E^\theta)^TP^\theta ) + {\rm tr}((E^\theta)^TE^\theta)\bigg] \; \d \theta \notag \\
&=
\int_0^1 \int_{\Gamma_h^\theta} 
\bigg[  {\rm tr}(E^\theta)^2  - {\rm tr}(E^\theta E^\theta) + {\rm tr}((E^\theta)^TE^\theta(I-P^\theta))   \bigg] \; \d \theta .
\end{align}
Then we use the following lemma, of which the proof is presented in Section \ref{Section:Proof Lemma}.\medskip

\begin{lemma}\label{Lemma:key}
In the above setting, the following identity holds: 
\begin{align}\label{trE2-trE2}
\int_{\Gamma_h^\theta} \big[ {\rm tr}(E^\theta)^2  - {\rm tr}(E^\theta E^\theta) \big]  
= 0 .
\end{align}
\end{lemma}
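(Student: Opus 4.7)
Choose an orthonormal tangent frame $\{\tau_1,\tau_2\}$ on each smooth polynomial patch of $\Gamma_h^\theta$. Since each column of $E^\theta$ lies in the tangent plane, i.e.\ $P^\theta E^\theta=E^\theta$, expanding the two traces in this frame collapses the $3\times 3$ expression to a $2\times 2$ determinant:
\begin{equation*}
{\rm tr}(E^\theta)^2-{\rm tr}(E^\theta E^\theta)=2\det A^\theta,\qquad A^\theta_{\alpha\beta}=\tau_\alpha\cdot\partial_{\tau_\beta}e_h^\theta,\ \ \alpha,\beta\in\{1,2\}.
\end{equation*}
Thus the claim is equivalent to $\int_{\Gamma_h^\theta}\det A^\theta=0$. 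The guiding principle is that $\det(\nabla u)$ is the prototypical null Lagrangian in flat two-dimensional space, satisfying $2\det(\nabla u)=(\nabla\cdot u)^2-{\rm tr}((\nabla u)^2)={\rm div}\bigl(u\,(\nabla\cdot u)-(u\cdot\nabla)u\bigr)$. The plan is to transplant this divergence structure onto the piecewise polynomial closed surface $\Gamma_h^\theta$ and to exploit the $C^0$ regularity of $e_h^\theta$ to cancel boundary contributions across patch interfaces.

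On each smooth polynomial patch $T$, derive
\begin{equation*}
2\det A^\theta=\nabla_{\Gamma_h^\theta}\!\cdot F^\theta+R^\theta,\qquad F^\theta=e_h^\theta\,(\nabla_{\Gamma_h^\theta}\!\cdot e_h^\theta)-(\nabla_{\Gamma_h^\theta} e_h^\theta)^{\!\top}e_h^\theta,
\end{equation*}
where $R^\theta$ collects remainder terms arising from the non-commutation of tangential derivatives on a curved patch, expressible through the second fundamental form of $\Gamma_h^\theta$ on $T$. Applying the surface divergence theorem patchwise and summing over the triangulation yields
\begin{equation*}
\int_{\Gamma_h^\theta}\!\bigl[{\rm tr}(E^\theta)^2-{\rm tr}(E^\theta E^\theta)\bigr]=\sum_T\int_{\partial T}F^\theta\!\cdot\mu_T\,ds+\sum_T\int_T R^\theta\,dA,
\end{equation*}
where $\mu_T$ denotes the outward unit conormal of $T$ within its tangent plane.

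Two cancellations must then be established. The sum of boundary integrals should telescope to zero: each interior edge is shared by exactly two patches with opposite orientation of $\mu_T$, and the trace of $e_h^\theta$ together with its derivative along the edge direction are continuous across the edge by the $C^0$ regularity of the finite element function; the plan is to show that, after a structural rewriting, $F^\theta\!\cdot\mu_T$ restricted to an edge depends only on these continuous edge-intrinsic quantities, which forces pairwise cancellation. Meanwhile $\sum_T\int_T R^\theta$ should vanish, either by a further integration by parts on each patch—producing terms that re-enter the boundary sum and cancel there—or, on the closed piecewise polynomial surface, by a Gauss--Bonnet-type argument applied patch by patch.

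The hardest step will be the edge cancellation: the tangent plane, and therefore $F^\theta$ itself, jumps across patch interfaces, so continuity of $F^\theta\!\cdot\mu_T$ is not automatic. Carrying out the cancellation requires rewriting the flux at each interior edge in a frame-invariant way that exposes only $C^0$ data of $e_h^\theta$; if this reduction cannot be performed in isolation, the jump contributions from the boundary sum must be paired globally against the curvature remainder $R^\theta$ so that the two sources of error annihilate each other when summed over all patches of $\Gamma_h^\theta$.
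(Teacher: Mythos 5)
Your reduction of the integrand to $2\det A^\theta$ is correct, and the null-Lagrangian heuristic is the right way to see why the identity is true. But the proposal is a plan rather than a proof, and the step you yourself flag as hardest --- cancellation of the patchwise boundary terms --- is precisely where it fails, in a way your two fallbacks do not repair. Carrying out the flux computation on an edge of a patch $T$, with conormal $\mu_T$ and edge direction $s$, the transversal derivatives cancel and one finds
\begin{equation*}
F^\theta\cdot\mu_T=(e_h^\theta\cdot\mu_T)(s\cdot\partial_s e_h^\theta)-(e_h^\theta\cdot s)(\mu_T\cdot\partial_s e_h^\theta)=\det\big(\hat n_T,\,e_h^\theta,\,\partial_s e_h^\theta\big),
\end{equation*}
where $\hat n_T$ is the unit normal of the patch. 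So the flux does involve only $C^0$ edge data of $e_h^\theta$ --- but it also involves $\hat n_T$, which jumps across every interior edge of the piecewise polynomial surface. Summing the two sides of a shared edge leaves $\int \det(\hat n_{T_1}-\hat n_{T_2},e_h^\theta,\partial_s e_h^\theta)\,\d s$, which does not vanish edge by edge. Moreover the curvature remainder on each patch is of the form $-\det(\d\hat n_T,e_h^\theta,\d e_h^\theta)$, and integrating it by parts once more merely reproduces $-2\det A^\theta$ plus the same boundary terms, so the suggestion to pair the jumps against the remainder is circular as stated. A global cancellation of all edge and curvature contributions does hold (it is equivalent to the lemma), but nothing in the proposal establishes it, and no purely edge-local argument can.

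The paper sidesteps the edges entirely: Proposition \ref{prop:int_Gamma_star} proves the identity for $H^1$ vector fields on a \emph{smooth closed} surface by two componentwise integrations by parts, in which the mean-curvature terms produced by the commutator of tangential derivatives cancel exactly and there is no boundary; Proposition \ref{Lemma:int_Lipscthiz_Gamma} then approximates the Lipschitz surface $\Gamma_h^\theta$ by smooth surfaces (via Doktor's theorem) and $e_h^\theta$ by globally smooth functions, and passes to the limit in the quadratic integrand. To rescue a direct patchwise argument you would have to quantify and globally sum the normal jumps and curvature remainders, which in effect amounts to redoing this smoothing; I would therefore follow the approximation route.
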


By applying Lemma \ref{Lemma:key} to \eqref{idea-surface-2}, we obtain 
\begin{align}\label{idea-surface}
\big({\bf A}(\bfx )\bfx -{\bf A}(\bfx ^*)\bfx ^*\big)\cdot(\bfx -\bfx ^*) 
&=
\int_0^1 \int_{\Gamma_h^\theta} 
{\rm tr}((E^\theta)^TE^\theta(I-P^\theta))   \; \d \theta \notag \\
&=
\int_0^1 \int_{\Gamma_h^\theta} | (\nabla_{\Gamma_h^\theta} e_h^\theta) \hat n_h^\theta|^2 \d\theta . 
\end{align}
This is the key identity to be used in our error estimation. This identity reflects the monotone structure of the discrete nonlinear operator from $\bfx$ to ${\bf A}(\bfx )\bfx$. 
%This identity was proved in \cite{Li-2020-SINUM} in a much easier way for curve shortening flow, and was used therein to prove convergence of Dziuk's linearly implicit FEM for curve shortening flow. 

\subsection{Error estimation} 

Testing \eqref{FEM-Error-Eq} by $\bfe$ and using \eqref{idea-surface}, we obtain 
\begin{align}\label{FEM-Error-Eq-2}
&
{\bf M}(\bfx )\dot \bfe \cdot \bfe
+ \int_0^1 \int_{\Gamma_h^\theta} |(\nabla_{\Gamma_h^\theta} e_h^\theta) \hat n_h^\theta|^2 \d\theta \notag \\
&
= - ({\bf M}(\bfx )-{\bf M}(\bfx ^*))\dot \bfx ^*  \cdot \bfe 
- {\bf M}(\bfx^*) {\bf d}\cdot \bfe . 
\end{align} 
This can be equivalently formulated as 
\begin{align}\label{FEM-Error-Eq-3}
&\frac{\d}{\d t}\bigg( \frac12 {\bf M}(\bfx ) \bfe \cdot \bfe \bigg)
+ \int_0^1 \int_{\Gamma_h^\theta} |(\nabla_{\Gamma_h^\theta} e_h^\theta) \hat n_h^\theta|^2 \d\theta \notag \\
&= \frac12 \dot {\bf M}(\bfx ) \bfe \cdot \bfe 
- ({\bf M}(\bfx )-{\bf M}(\bfx ^*))\dot \bfx ^*  \cdot \bfe 
- {\bf M}(\bfx^*) {\bf d}\cdot \bfe .
\end{align} 

Let $v=-Hn$ be the velocity of the exact surface $\Gamma$, and let $v_j^*$ be the velocity of the exact surface at the $j$th interpolation node. We define  
$$
v_h^* =\sum_{j=1}^{\dof} v_j^*\phi_j[\bfx^*] = \sum_{j=1}^{\dof}\dot x_j^*\phi_j[\bfx^*] , 
$$ 
which is the interpolation of $v$ onto $S_h(\Gamma_h[\bfx^*])$. Let $v_h^{*,l}$ be the lift of $v_h^*$ onto the exact surface $\Gamma$, and denote 
$$
v_h^{*,\theta} = \sum_{j=1}^{\dof} v_j^*\phi_j[\bfx^\theta] = \sum_{j=1}^{\dof}\dot x_j^*\phi_j[\bfx^\theta] ,
$$
which is a finite element function on the surface $\Gamma_h^\theta$.

Let $v_h=\sum_{j=1}^{\dof} \dot x_j\phi_j[\bfx]$ be the velocity of the approximate surface $\Gamma_h[\bfx]$, and let 
$$
v_h^\theta=\sum_{j=1}^{\dof} \dot x_j\phi_j[\bfx^\theta] . 
$$
Then the nodal vector associated to the finite element function $v_h^0-v_h^*\in S_h(\Gamma_h[\bfx^*])$ is $\dot\bfe$, and by using the norm equivalence in Lemma \ref{lemma:theta-independence},
\begin{align} \label{div-vh}
\|\nabla_{\Gamma_h[\bfx]}\cdot v_h\|_{L^\infty(\Gamma_h[\bfx])}
\le &
c\|\nabla_{\Gamma_h[\bfx]}v_h\|_{L^\infty(\Gamma_h[\bfx])} \notag \\
\le &
c\|\nabla_{\Gamma_h[\bfx^*]}v_h^0\|_{L^\infty(\Gamma_h[\bfx^*])} \notag \\
\le &
c\|\nabla_{\Gamma_h[\bfx^*]}(v_h^0-v_h^*)\|_{L^\infty(\Gamma_h[\bfx^*])}
+c\|\nabla_{\Gamma_h[\bfx^*]}v_h^*\|_{L^\infty(\Gamma_h[\bfx^*])} \notag \\
%\le &
%ch^{-1}\|\nabla_{\Gamma_h[\bfx^*]}(v_h^0-v_h^*)\|_{L^2(\Gamma_h[\bfx^*])}
%+c \notag \\
\le &
ch^{-2}\|v_h^0-v_h^*\|_{L^2(\Gamma_h[\bfx^*])}
+c \quad\mbox{(inverse inequality)} \notag \\
= &
ch^{-2}\|\dot\bfe\|_{\bfM(\bfx^*)}
+c \notag \\
\le &
ch^{-4}\| \bfe \|_{\bfM(\bfx^*)} + ch^{k-3} 
+c \notag \\
\le & c ,
\end{align} 
where the last inequality uses \eqref{eq:assumed bounds - 2} and $k\ge 3$, and the second to last inequality can be proved as follows. Testing \eqref{FEM-Error-Eq} with ${\bf w}$, we obtain 
\begin{align}\label{M-dot-e-w}
\bfM(\bfx^*) \dot \bfe \cdot {\bf w} 
& = 
- ({\bf A}(\bfx )\bfx  -{\bf A}(\bfx ^*)\bfx ^*)\cdot {\bf w}
- ({\bf M}(\bfx )-{\bf M}(\bfx ^*))\dot \bfx ^* \cdot {\bf w} 
- {\bf M}(\bfx^*) {\bf d} \cdot {\bf w} . 
\end{align} 
By using Lemma \ref{lemma:matrix differences}, we have 
\begin{align}\label{dot-x-estimate-1}
&- ({\bf A}(\bfx )\bfx  -{\bf A}(\bfx ^*)\bfx ^*)\cdot {\bf w} \notag \\
& =
-\int_0^1 \int_{\Gamma_h^\theta} 
\bigg( \nabla_{\Gamma_h^\theta}  w_h^\theta : (D_{\Gamma_h^\theta} e_h^\theta)\nabla_{\Gamma_h^\theta} {\rm id} 
+ \nabla_{\Gamma_h^\theta}  w_h^\theta : \nabla_{\Gamma_h^\theta}  e_h^\theta \bigg) \; \d \theta \notag \\
& \le 
\int_0^1
c\|\nabla_{\Gamma_h^\theta}  w_h^\theta \|_{L^2(\Gamma_h^\theta)}\|\nabla_{\Gamma_h^\theta} e_h^\theta\|_{L^2(\Gamma_h^\theta)}   
\d\theta \notag \\
& \le 
\int_0^1 ch^{-2} \|w_h^\theta \|_{L^2(\Gamma_h^\theta)}\|e_h^\theta\|_{L^2(\Gamma_h^\theta)}   
\d\theta \notag \\
& =
ch^{-2} \|{\bf w}\|_{{\bf M}(\bfx ^*)}\|\bfe\|_{{\bf M}(\bfx ^*)}  .
\end{align} 
By denoting $\dot x_h^\theta=\sum_{j=1}^{\dof} \dot x_j^*\phi_j[\bfx^\theta]$, we have 
\begin{align}
- ({\bf M}(\bfx ) - {\bf M}(\bfx ^*) \dot \bfx ^*\cdot {\bf w} 
& =
-\int_0^1 \int_{\Gamma_h^\theta} 
(\nabla_{\Gamma_h^\theta} \cdot e_h^\theta) w_h^\theta \cdot \dot x_h^\theta  \, \d \theta \notag \\
& \le 
\int_0^1
c\|w_h^\theta \|_{L^2(\Gamma_h^\theta)}\|\nabla_{\Gamma_h^\theta} e_h^\theta\|_{L^2(\Gamma_h^\theta)}   
\d\theta \notag \\
& \le 
\int_0^1 ch^{-1} \|w_h^\theta \|_{L^2(\Gamma_h^\theta)}\|e_h^\theta\|_{L^2(\Gamma_h^\theta)}   
\d\theta \notag \\
& =
ch^{-1} \|{\bf w}\|_{{\bf M}(\bfx ^*)} \|\bfe\|_{{\bf M}(\bfx ^*)} . 
\end{align} 
By using the estimate \eqref{defect-estimate} for the defect ${\bf d}$, we have 
\begin{align}\label{dot-x-estimate-3}
{\bf M}(\bfx^*) {\bf d} \cdot {\bf w} 
&\le
c\|{\bf d}\|_{{\bf M}(\bfx ^*)} \|{\bf w}\|_{{\bf M}(\bfx ^*)} 
\le
ch^{k-1} \|{\bf w}\|_{{\bf M}(\bfx ^*)} .
\end{align} 
Substituting \eqref{dot-x-estimate-1}--\eqref{dot-x-estimate-3} into \eqref{M-dot-e-w} and choosing ${\bf w}=\dot\bfe$, we obtain
$$
\|\dot\bfe\|_{{\bf M}(\bfx ^*)}
\le
c(h^{k-1}+h^{-2}\|\bfe\|_{{\bf M}(\bfx ^*)}) .
$$
This proves the second to last inequality of \eqref{div-vh}. 

Recall that the finite element function on $\Gamma_h[\bfx]$ with the nodal vector $\bfe$ is denoted by $e_h^1$. 
By using \eqref{div-vh}, the first term on the right-hand side of \eqref{FEM-Error-Eq-3} can be estimated as follows:
\begin{align}\label{1st-term}
\frac12 \dot {\bf M}(\bfx ) \bfe \cdot \bfe 
=&
\frac12\int_{\Gamma_h[\bfx]} (\nabla_{\Gamma_h[\bfx]}\cdot v_h) e_h^1\cdot e_h^1
\quad\mbox{(this can be obtained from \cite[(2.9)]{DziukElliott_ESFEM})} \notag \\
\le&
c\|\nabla_{\Gamma_h[\bfx]}\cdot v_h\|_{L^\infty(\Gamma_h[\bfx])}
\|e_h^1\|_{L^2(\Gamma_h[\bfx])}^2 \notag\\
\le&
c \|\bfe\|_{\bfM(\bfx)}^2 \notag\\
\le&
c \|\bfe\|_{\bfM(\bfx^*)}^2 , 
\end{align} 
where the norm equivalence in \eqref{norm-equiv} is used. 

The third term on the right-hand side of \eqref{FEM-Error-Eq-3} satisfies 
\begin{align}\label{3rd-term}
- {\bf M}(\bfx^*) {\bf d}\cdot \bfe 
\le&
c\|{\bf d}\|_{\bfM(\bfx^*)} \|\bfe\|_{\bfM(\bfx^*)} 
\le
ch^{k-1} \|\bfe\|_{\bfM(\bfx^*)}  .
\end{align} 

We decompose the second term on the right-hand side of \eqref{FEM-Error-Eq-3} into several terms as follows: 
\begin{align}\label{2nd-term-1}
&-({\bf M}(\bfx )-{\bf M}(\bfx ^*))\dot \bfx ^*  \cdot \bfe \notag \\
&= 
-\int_0^1 \int_{\Gamma_h^\theta} (\nabla_{\Gamma_h^\theta} \cdot e_h^\theta) v_h^{*,\theta} \cdot e_h^\theta \; \d\theta \notag \\
&= 
-\int_0^1 
\bigg(\int_{\Gamma_h^\theta}  (\nabla_{\Gamma_h^\theta} \cdot e_h^\theta)v_h^{*,\theta}\cdot e_h^\theta - \int_{\Gamma_h^*}  (\nabla_{\Gamma_h^*} \cdot e_h^*) v_h^*\cdot e_h^* \bigg)\; \d\theta \notag \\
&\quad
-
\int_0^1 
\bigg(\int_{\Gamma_h^*}  (\nabla_{\Gamma_h^*} \cdot e_h^*) v_h^*\cdot e_h^* - \int_{\Gamma} (\nabla_{\Gamma_h^*} \cdot e_h^*)^l v_h^{*,l} \cdot e_h^{*,l} \bigg)\; \d\theta \notag \\
&\quad
-
\int_0^1 
\int_{\Gamma} \big[ (\nabla_{\Gamma_h^*} \cdot e_h^*)^l - \nabla_{\Gamma} \cdot e_h^{*,l} \big] v_h^{*,l} \cdot e_h^{*,l} \; \d\theta \notag \\
&\quad
-
\int_0^1 
\int_{\Gamma} (\nabla_{\Gamma} \cdot e_h^{*,l}) (v_h^{*,l} -v)\cdot e_h^{*,l}  \; \d\theta \notag \\
&\quad
+ 
\int_0^1 
\int_{\Gamma} (\nabla_{\Gamma} \cdot e_h^{*,l}) Hn \cdot e_h^{*,l}
\d\theta 
\quad\mbox{(we have substituted $v=-Hn$ here)} \notag \\
&=: J_1+J_2+J_3+J_4
+ \int_0^1 
\int_{\Gamma} (\nabla_{\Gamma} \cdot e_h^{*,l}) Hn \cdot e_h^{*,l} 
\d\theta . 
\end{align} 
The purpose of transforming from $\Gamma_h^\theta$ to $\Gamma$ (namely to be able to replace $v$ with $Hn$) is to perform integration by parts on the last term of \eqref{2nd-term-1}. This would yield $(\nabla_{\Gamma}  e_h^{*,l})n$, which is the only term that contains the partial derivative of $e_h^{*,l}$ on the right-hand side. 
The term $(\nabla_{\Gamma}  e_h^{*,l})n$ can be furthermore converted to $(\nabla_{\Gamma_h^\theta} e_h^\theta )\hat n_h^\theta  $ (which can be absorbed by the left-hand side of \eqref{FEM-Error-Eq-2}) after transforming $\Gamma$ back to $\Gamma_h^\theta$, as shown in the following estimates. 

The last term in \eqref{2nd-term-1} can be estimated as follows. Using the integration by parts formula (cf. \cite[Section 2.3]{DziukElliott_acta})
$$
\int_\Gamma  f \, \nabla_\Gamma \cdot \varphi =  \int_\Gamma f\, H n \cdot \varphi -  \int_\Gamma \nabla_\Gamma f \cdot \varphi  ,
$$
we have 
\begin{align*} 
&\int_0^1 
\int_{\Gamma} (\nabla_{\Gamma} \cdot e_h^{*,l}) Hn \cdot e_h^{*,l} 
\d\theta \\
&=\int_0^1 
\int_{\Gamma} |H n \cdot e_h^{*,l}|^2 
-\int_0^1 \int_{\Gamma} e_h^{*,l} \cdot \nabla_{\Gamma}(Hn \cdot e_h^{*,l} ) 
\d\theta \\
&= 
\int_0^1\int_{\Gamma} |H n \cdot e_h^{*,l}|^2 
\d\theta
- \int_0^1 
\int_{\Gamma} (e_h^{*,l} \cdot \nabla_{\Gamma} H) n \cdot e_h^{*,l} 
\d\theta 
- \int_0^1 
\int_{\Gamma} H e_h^{*,l} \cdot  (\nabla_{\Gamma} n ) e_h^{*,l} \, 
\d\theta \\
&\quad 
- \int_0^1 
\int_{\Gamma} He_h^{*,l} \cdot (\nabla_{\Gamma} e_h^{*,l}) \, n \,
\d\theta \\
&\le
c\|e_h^{*,l}\|_{L^2(\Gamma)}^2 
- \int_0^1 
\int_{\Gamma} He_h^{*,l} \cdot (\nabla_{\Gamma} e_h^{*,l}) \, n \,
\d\theta .
\end{align*} 
Recall that $\hat n_h^*$ denotes the normal vector on $\Gamma_h[\bfx^*]$ and $\hat n_h^{*,l}$ is the lift of $\hat n_h^*$ onto $\Gamma$. 
By introducing $H_h^*\in S_h(\Gamma_h[\bfx^*])$ to be the finite element interpolation of $H$, and denoting by $H_h^{*,l}$ the lift of $H_h^*$ to the surface $\Gamma$, the inequality above furthermore implies that 
\begin{align*} 
&\int_0^1 
\int_{\Gamma} (\nabla_{\Gamma} \cdot e_h^{*,l}) Hn \cdot e_h^{*,l} 
\d\theta \\
&\le 
c\|e_h^{*,l}\|_{L^2(\Gamma)}^2  
- \int_0^1 
\int_{\Gamma} (H-H_h^{*,l})e_h^{*,l} \cdot (\nabla_{\Gamma} e_h^{*,l}) \, n \, \d\theta \\
&\quad 
- \int_0^1 
\int_{\Gamma} H_h^{*,l} e_h^{*,l} \cdot (\nabla_{\Gamma} e_h^{*,l}) (n-\hat n_h^{*,l}) \, \d\theta 
- \int_0^1 
\int_{\Gamma} H_h^{*,l} e_h^{*,l} \cdot (\nabla_{\Gamma} e_h^{*,l}) \hat n_h^{*,l} \,
\d\theta  \\
&\le 
c\|e_h^{*,l}\|_{L^2(\Gamma)}^2 
+ ch^{k+1}\|e_h^{*,l}\|_{L^2(\Gamma)}\|\nabla_{\Gamma}e_h^{*,l}\|_{L^2(\Gamma)} \\
&\quad 
+ c h^{k}\|e_h^{*,l}\|_{L^2(\Gamma)}\|\nabla_{\Gamma}e_h^{*,l}\|_{L^2(\Gamma)} 
- \int_0^1 
\int_{\Gamma} H_h^{*,l} e_h^{*,l} \cdot (\nabla_{\Gamma} e_h^{*,l}) \hat n_h^{*,l} \,
\d\theta ,
\end{align*} 
where the last inequality uses the interpolation error estimate \eqref{interpl-error}--\eqref{normal-error}. 
By using the norm equivalence $\|e_h^{*,l}\|_{L^2(\Gamma)}\sim \|e_h^*\|_{L^2(\Gamma_h^*)}$ and $\|\nabla_{\Gamma}e_h^{*,l}\|_{L^2(\Gamma)}\sim \|\nabla_{\Gamma_h^*}e_h^*\|_{L^2(\Gamma_h^*)}$ in Lemma \ref{lemma:theta-independence}, and using the inverse inequality of finite element functions, we obtain from the above inequality
\begin{align*} 
&\int_0^1 
\int_{\Gamma} (\nabla_{\Gamma} \cdot e_h^{*,l}) Hn \cdot e_h^{*,l} 
\d\theta \\
&\le 
c\|e_h^*\|_{L^2(\Gamma_h^*)}^2 
-\int_0^1 
\int_{\Gamma} H_h^{*,l} e_h^{*,l} \cdot (\nabla_{\Gamma} e_h^{*,l}) \hat n_h^{*,l} \,
\d\theta \\
&=
c\|e_h^*\|_{L^2(\Gamma_h^*)}^2 
+ \int_0^1 
\bigg(
\int_{\Gamma_h^*}  H_h^* e_h^*\cdot (\nabla_{\Gamma_h^*} e_h^*) \hat n_h^*
-\int_{\Gamma} H_h^{*,l} e_h^{*,l} \cdot (\nabla_{\Gamma} e_h^{*,l} ) \hat n_h^{*,l}  \bigg) \,
\d\theta \\
&\quad
+ \int_0^1 
\bigg(
\int_{\Gamma_h^\theta}  H_h^{*,\theta} e_h^\theta\cdot (\nabla_{\Gamma_h^\theta} e_h^\theta ) \hat n_h^{*,\theta} 
-
\int_{\Gamma_h^*}  H_h^* e_h^*\cdot (\nabla_{\Gamma_h^*} e_h^*) \hat n_h^* 
\bigg) \, 
\d\theta \\
&\quad
+ \int_0^1 
\int_{\Gamma_h^\theta}  H_h^{*,\theta} e_h^\theta\cdot (\nabla_{\Gamma_h^\theta} e_h^\theta ) (\hat n_h^\theta-\hat n_h^{*,\theta}) \, 
\d\theta \\
&\quad 
-\int_0^1 
\int_{\Gamma_h^\theta}  H_h^{*,\theta} e_h^\theta\cdot (\nabla_{\Gamma_h^\theta} e_h^\theta )\hat n_h^\theta  \, 
\d\theta \\
&=
c\|e_h^*\|_{L^2(\Gamma_h^*)}^2 
+J_5+J_6+J_7+J_8 , 
\end{align*}
where $H_h^{*,\theta}$ is defined as the finite element function on $\Gamma_h^\theta$ with the same nodal vector as $H_h^{*}$. Substituting this into \eqref{2nd-term-1} yields 
\begin{align}\label{2nd-term-2}
-({\bf M}(\bfx )-{\bf M}(\bfx ^*))\dot \bfx ^*  \cdot \bfe  
&\le 
c\|e_h^*\|_{L^2(\Gamma_h^*)}^2 
+ \sum_{m=1}^8 J_m .
\end{align} 

\subsection{Estimation of $J_m$, $m=1,\dots,8$}
\begin{align}
J_1 
=& 
-\int_0^1 
\bigg(\int_{\Gamma_h^\theta}  (\nabla_{\Gamma_h^\theta} \cdot e_h^\theta)v_h^{*,\theta}\cdot e_h^\theta - \int_{\Gamma_h^*}  (\nabla_{\Gamma_h^*} \cdot e_h^*) v_h^*\cdot e_h^* \bigg)\; \d\theta \notag \\
=& 
-\int_0^1 
\int_0^{\theta} \bigg( \frac{\d}{\d\sigma}\int_{\Gamma_h^{\sigma}}  (\nabla_{\Gamma_h^{\sigma}} \cdot e_h^{\sigma})v_h^{*,\sigma}\cdot e_h^{\sigma} \bigg) \d\sigma \d\theta 
\quad\, \mbox{(Newton--Leibniz rule)}  \notag \\
=& 
-\int_0^1 (1-\sigma) \bigg( \frac{\d}{\d\sigma}\int_{\Gamma_h^{\sigma}}  (\nabla_{\Gamma_h^{\sigma}} \cdot e_h^{\sigma})v_h^{*,\sigma}\cdot e_h^{\sigma} \bigg) \d\sigma \quad \mbox{(order of integration is changed)} \notag \\
=& 
-\int_0^1 (1-\theta) \bigg( \frac{\d}{\d\theta}\int_{\Gamma_h^{\theta}}  (\nabla_{\Gamma_h^{\theta}} \cdot e_h^{\theta})v_h^{*,\theta}\cdot e_h^{\theta} \bigg) \d\theta \quad\,\,\, \mbox{($\sigma$ is changed to $\theta$)} \notag \\
=& 
- \int_0^1 
\bigg[ (1-\theta) \int_{\Gamma_h^{\theta}}  
\bigg( \partial_\theta^{\bullet}(\nabla_{\Gamma_h^{\theta}} \cdot e_h^{\theta})v_h^{*,\theta}\cdot e_h^{\theta} 
+ |\nabla_{\Gamma_h^{\theta}} \cdot e_h^{\theta}|^2 v_h^{*,\theta}\cdot e_h^{\theta}  
\bigg) \bigg] \d\theta .
\end{align} 
where the last inequality uses the properties $\partial_\theta^{\bullet}v_h^{*,\theta}=\partial_\theta^{\bullet}e_h^\theta=0$, and the fact that the surface $\Gamma_h^\theta$ moves with velocity $e_h^\theta$ with respect to $\theta$. 
By using the identity (cf. \cite[Lemma~2.6]{DziukKronerMuller}) 
\begin{align}
\label{eq:mat grad interchange}
\partial_\theta^\bullet \big( \nabla_{\Gamma_h^\theta} \cdot e_h^{\theta} \big) 
=
	\nabla_{\Gamma_h^\theta} \cdot \partial_\theta^\bullet e_h^{\theta} 
	- {\rm tr} \bigg[ \Big( \nabla_{\Gamma_h^\theta} e_h^\theta - \hat n_h^\theta (\hat n_h^\theta)^T (\nabla_{\Gamma_h^\theta} e_h^\theta)^T \Big) \nabla_{\Gamma_h^\theta}e_h^{\theta}\bigg] \notag \\
=
	- {\rm tr} \bigg[ \Big( \nabla_{\Gamma_h^\theta} e_h^\theta - \hat n_h^\theta (\hat n_h^\theta)^T (\nabla_{\Gamma_h^\theta} e_h^\theta)^T \Big) \nabla_{\Gamma_h^\theta}e_h^{\theta}\bigg] 
	\quad\mbox{(since $\partial_\theta^\bullet e_h^{\theta} =0$)} ,
\end{align}
we find that 
\begin{align}\label{estimate-J1}
J_1 
\le & 
\int_0^1 c \|\nabla_{\Gamma_h^{\theta}} e_h^{\theta}\|_{L^\infty(\Gamma_h^\theta)}
\|\nabla_{\Gamma_h^{\theta}} e_h^{\theta}\|_{L^2(\Gamma_h^\theta)}
\| e_h^{\theta}\|_{L^2(\Gamma_h^\theta)} \d\theta \notag\\
\le & 
\int_0^1 ch^2
\|\nabla_{\Gamma_h^{\theta}} e_h^{\theta}\|_{L^2(\Gamma_h^\theta)}
\| e_h^{\theta}\|_{L^2(\Gamma_h^\theta)} \d\theta 
\quad \mbox{(estimate \eqref{eq:assumed bounds - W1infty} is used)} \notag\\
\le & 
\int_0^1 ch
\|e_h^{\theta}\|_{L^2(\Gamma_h^\theta)}^2 \d\theta 
\quad \mbox{(inverse inequality)}  \notag\\
= & 
ch \|{\bf e}\|_{\bfM(\bfx^*)}^2. \quad\,\, \mbox{(norm equivalence \eqref{norm-equiv})} 
\end{align} 

Let $x^l$ denote the lift of $x\in \Gamma_h^*$ onto $\Gamma$. By using \eqref{delta_h-1} we have 
\begin{align}
J_2
&=
-
\bigg(\int_{\Gamma_h^*}  (\nabla_{\Gamma_h^*} \cdot e_h^*) v_h^*\cdot e_h^* - \int_{\Gamma} (\nabla_{\Gamma_h^*} \cdot e_h^*)^l v_h^{*,l} \cdot e_h^{*,l} \bigg) \notag \\ 
&=-
\int_{\Gamma_h^*} (1-\delta_h) (\nabla_{\Gamma_h^*} \cdot e_h^*) v_h^{*} \cdot e_h^{*} \notag \\
&\le
c\|1-\delta_h\|_{L^\infty(\Gamma_h^*)} 
\|\nabla_{\Gamma_h^*} \cdot e_h^*\|_{L^2(\Gamma_h^*)} 
\|v_h^{*}\|_{L^\infty(\Gamma_h^*)} \|e_h^{*}\|_{L^2(\Gamma_h^*)} \notag \\
&\le
ch^{k+1} 
\|\nabla_{\Gamma_h^*} \cdot e_h^*\|_{L^2(\Gamma_h^*)}  \|e_h^{*}\|_{L^2(\Gamma_h^*)} \notag \\
&\le
ch^{k} \|e_h^{*}\|_{L^2(\Gamma_h^*)}^2 \notag \\
&=
ch^{k} \|\bfe\|_{\bfM(\bfx^*)}^2 ,
\end{align}
where we have used inverse inequality in the second to last inequality.

For the exact surface $\Gamma=\Gamma(t)$, we denote by $d(x)$ the signed distance from $x$ to $\Gamma$, defined by 
$$
d(x)=
\left\{\begin{aligned}
&|x-x^l| &&\mbox{if}\,\,\, x\in\R^3\backslash\Omega,\\
&-|x-x^l| &&\mbox{if}\,\,\, x\in\Omega .
\end{aligned}
\right.
$$ 
Let $\mathcal{H}=\nabla_{\Gamma} n$ be the Weingarten matrix on $\Gamma$. Then the following identity holds (for example, see \cite[Remark 4.1]{Dziuk-Elliott-2013-MC}): 
\begin{align*}
\nabla_{\Gamma_{h}^{*}} e_h^{*}(x)=P_{h}(x)(I-d(x) \mathcal{H}(x^l)) \nabla_{\Gamma} e_h^{*,l}(x^l) ,
\end{align*}
where $P_h(x)=I_3-\hat n_h^*(x)\hat n_h^*(x)^T$, with $\hat n_h^*$ denoting the normal vector on $\Gamma_h^*$. Hence, denoting $P(x^l)=I_3-\hat n(x^l)\hat n(x^l)^T$, we have 
\begin{align}\label{lift-eh-ehl}
&|(\nabla_{\Gamma_{h}^{*}} e_h^{*})^l(x^l)-\nabla_{\Gamma} e_h^{*,l}(x^l)| \notag \\
&=
\Big|P_{h}(x)(I-d(x) \mathcal{H}(x^l)) \nabla_{\Gamma} e_h^{*,l}(x^l)
-P(x^l) \nabla_{\Gamma} e_h^{*,l}(x^l) \Big| \notag \\
&=
\Big| \big[ (P_{h}(x)-P(x^l))(I-d(x) \mathcal{H}(x^l))
-d(x) P(x^l)\mathcal{H}(x^l) \big]
\nabla_{\Gamma} e_h^{*,l}(x^l)  \Big| \notag \\
&\le 
(ch^k+ch^{k+1}) |\nabla_{\Gamma} e_h^{*,l}(x^l) | \notag \\
&\le ch^k |\nabla_{\Gamma} e_h^{*,l}(x^l) | . 
\end{align}
where the second to last inequality uses estimate \eqref{normal-error} in estimating $P_{h}(x)-P(x^l)$, and uses $|d(x)|\le ch^{k+1}$ (see \cite[Lemma 5.2]{Kovacs2018}). 
For sufficiently small $h$, the inequality above furthermore implies, via using the triangle inequality, 
\begin{align}\label{grad-Gamma-Gamma_h}
\|\nabla_{\Gamma} e_h^{*,l}\|_{L^2(\Gamma)}  
&\le 
c\|(\nabla_{\Gamma_h^*} e_h^{*})^l\|_{L^2(\Gamma)} 
\le 
c\|\nabla_{\Gamma_h^*} e_h^{*}\|_{L^2(\Gamma_h^*)} ,
\end{align}
where we have used the norm equivalence between $\|(\nabla_{\Gamma_{h}^*} e_h^{*})^l\|_{L^2(\Gamma)} $ and $\|\nabla_{\Gamma_{h}^*} e_h^{*}\|_{L^2(\Gamma_h^*)} $ as shown in Lemma \ref{lemma:theta-independence}. By using the two results above, we have 
\begin{align*}
J_3
&=
-\int_{\Gamma}  [(\nabla_{\Gamma_h^*} \cdot e_h^*)^l-\nabla_{\Gamma} \cdot e_h^{*,l}] v_h^{*,l} \cdot e_h^{*,l}  \\
%&\le
%ch^k \int_{\Gamma} |\nabla_{\Gamma} e_h^{*,l}|  
%|v_h^{*,l}| |e_h^{*,l} | \\
&\le
ch^k\|\nabla_{\Gamma} e_h^{*,l}\| _{L^2(\Gamma)}
\| v_h^{*,l} \| _{L^\infty(\Gamma)}
\| e_h^{*,l} \| _{L^2(\Gamma)} \\
%&\le
%ch^k\|\nabla_{\Gamma_h^*} e_h^{*}\| _{L^2(\Gamma_h^*)}
%\| e_h^{*} \| _{L^2(\Gamma_h^*)} \\
&\le
ch^{k-1} \|e_h^{*}\| _{L^2(\Gamma_h^*)}^2 \notag \\
&=
ch^{k-1} \|\bfe\|_{\bfM(\bfx^*)}^2 ,
\end{align*}
where we have used inverse inequality in the second to last inequality. 

Since the lifted Lagrange interpolation $v_h^{*,l} $ has optimal-order accuracy in approximating $v$, as shown in \eqref{interpl-error}, it follows that 
\begin{align*}
J_4
=-
\int_{\Gamma} (\nabla_{\Gamma} \cdot e_h^{*,l}) (v_h^{*,l} -v)\cdot e_h^{*,l} 
&\le 
c\|v_h^{*,l} -v\|_{L^\infty(\Gamma)} \|\nabla_{\Gamma} \cdot e_h^{*,l}\|_{L^2(\Gamma)}
\|e_h^{*,l}\|_{L^2(\Gamma)} \\
&\le 
ch^{k+1} \|\nabla_{\Gamma} \cdot e_h^{*,l}\|_{L^2(\Gamma)}
\|e_h^{*,l}\|_{L^2(\Gamma)} \\
&\le 
ch^{k+1} \|\nabla_{\Gamma} \cdot e_h^{*}\|_{L^2(\Gamma_h^*)}
\|e_h^{*}\|_{L^2(\Gamma_h^*)} \\
&\le 
ch^{k} \|e_h^{*}\|_{L^2(\Gamma_h^*)}^2 \\
&=
ch^{k} \|\bfe\|_{\bfM(\bfx^*)}^2 . 
\end{align*}

Recall that $H_h^*$ is the finite element interpolation of $H$ onto $\Gamma_h[\bfx^*]$ and $H_h^{*,l}$ is the lift of $H_h^*$ onto the surface $\Gamma$. 
By using \eqref{delta_h-1} and \eqref{lift-eh-ehl}, we can estimate $J_5$ similarly as $J_3$, i.e., 
\begin{align}
J_5
&=
\int_{\Gamma_h^*}  H_h^* e_h^*\cdot (\nabla_{\Gamma_h^*} e_h^*) \hat n_h^*
-\int_{\Gamma} H_h^{*,l} e_h^{*,l} \cdot (\nabla_{\Gamma} e_h^{*,l}) \hat n_h^{*,l} \notag \\
&=
\int_{\Gamma}  \delta_h^{-1} H_h^{*,l} e_h^{*,l} \cdot (\nabla_{\Gamma_h^*} e_h^*)^l \hat n_h^{*,l}
-\int_{\Gamma} H_h^{*,l} e_h^{*,l} \cdot (\nabla_{\Gamma} e_h^{*,l})\hat n_h^{*,l} \notag \\
&=
\int_{\Gamma} ( \delta_h^{-1} - 1 ) H_h^{*,l} e_h^{*,l} \cdot (\nabla_{\Gamma_h^*} e_h^*)^l  \hat n_h^{*,l}
+\int_{\Gamma} H_h^{*,l} e_h^{*,l} \cdot [(\nabla_{\Gamma_h^*} e_h^*)^l - \nabla_{\Gamma} e_h^{*,l} ]  \hat n_h^{*,l} \notag \\ 
&\le
ch^{k+1} \|e_h^{*,l}\|_{L^2(\Gamma)} \|\nabla_{\Gamma_h^*} e_h^{*,l}\|_{L^2(\Gamma)} 
+ch^k\|e_h^{*,l}\|_{L^2(\Gamma)} \|\nabla_{\Gamma_h^*} e_h^{*,l}\|_{L^2(\Gamma)} \notag \\
&\le
ch^{k+1} \|e_h^{*}\|_{L^2(\Gamma_h^*)} \|\nabla_{\Gamma_h^*} e_h^{*}\|_{L^2(\Gamma_h^*)} 
+ch^k\|e_h^{*}\|_{L^2(\Gamma_h^*)} \|\nabla_{\Gamma_h^*} e_h^{*}\|_{L^2(\Gamma_h^*)} \notag \\
&\le
ch^{k-1} \|e_h^{*}\|_{L^2(\Gamma_h^*)}^2 \notag \\
&=
ch^{k-1} \|\bfe\|_{\bfM(\bfx^*)}^2 ,
\end{align}
where we have used inverse inequality in the second to last inequality.

Recall that $H_h^{*,\theta}$ is finite element function on $\Gamma_h^\theta$ with the same nodal vector as the interpolated finite element function $H_h^{*}$. Since $e_h^0=e_h^*$, as defined in \eqref{def-eh-star-0}, it follows that 
\begin{align*}
J_6
&=
\int_0^1 
\bigg(
\int_{\Gamma_h^\theta}  H_h^{*,\theta} e_h^\theta\cdot (\nabla_{\Gamma_h^\theta} e_h^\theta ) \hat n_h^{*,\theta} 
-
\int_{\Gamma_h^*}  H_h^* e_h^*\cdot (\nabla_{\Gamma_h^*} e_h^* ) \hat n_h^* 
\bigg) \, 
\d\theta \\
&=
\int_0^1 
\int_0^{\theta} \frac{\d}{\d\sigma } \int_{\Gamma_h^\sigma}  H_h^{*,\sigma} e_h^\sigma\cdot (\nabla_{\Gamma_h^\sigma} e_h^\sigma)\hat n_h^{*,\sigma} \d\sigma 
\d\theta \\
&=
\int_0^1 
\int_{\sigma}^{1} \frac{\d}{\d\sigma } \int_{\Gamma_h^\sigma}  H_h^{*,\sigma} e_h^\sigma\cdot (\nabla_{\Gamma_h^\sigma} e_h^\sigma) \hat n_h^{*,\sigma} 
\d\theta \d\sigma \quad\, \mbox{(order of integration is changed)} \\
&=
\int_0^1 (1-\sigma) \frac{\d}{\d\sigma } \int_{\Gamma_h^\sigma}  H_h^{*,\sigma} e_h^\sigma\cdot (\nabla_{\Gamma_h^\sigma} e_h^\sigma) \hat n_h^{*,\sigma} \d\sigma
 \\
&=
\int_0^1 (1-\theta) \frac{\d}{\d\theta } \int_{\Gamma_h^\theta}  H_h^{*,\theta} e_h^\theta\cdot (\nabla_{\Gamma_h^\theta} e_h^\theta) \hat n_h^{*,\theta} 
\d\theta 
\quad\,\,\,\mbox{($\sigma$ is changed to $\theta$)} \\
%\quad\mbox{(changed notation from $\sigma$ to $\theta$)} 
&=
\int_0^1 (1-\theta) \int_{\Gamma_h^\theta}  
\Big( H_h^{*,\theta} e_h^\theta\cdot \partial_\theta^{\bullet}(\nabla_{\Gamma_h^\theta} e_h^\theta) \hat n_h^{*,\theta} 
 + (\nabla_{\Gamma_h^\theta} \cdot e_h^\theta) H_h^{*,\theta} e_h^\theta\cdot (\nabla_{\Gamma_h^\theta} e_h^\theta) \hat n_h^{*,\theta} 
\Big)
\d\theta ,
\end{align*} 
where the last equality uses the facts that $\partial_\theta^{\bullet} H_h^{*,\theta} = \partial_\theta^{\bullet} e_h^{\theta} =\partial_\theta^{\bullet} n_h^{*,\theta} =0$ and the surface $\Gamma_h^\theta$ moves with velocity $e_h^\theta$ with respect to $\theta$. 
By substituting the identity (cf. \cite[Lemma~2.6]{DziukKronerMuller}),  
\begin{align}
\label{eq:mat grad interchange-2}
\partial_\theta^\bullet \big( \nabla_{\Gamma_h^\theta} e_h^{\theta} \big) 
=
	\nabla_{\Gamma_h^\theta} \partial_\theta^\bullet e_h^{\theta} 
	-  \Big( \nabla_{\Gamma_h^\theta} e_h^\theta - \hat\nu_h^\theta (\hat\nu_h^\theta)^T (\nabla_{\Gamma_h^\theta} e_h^\theta)^T \Big) \nabla_{\Gamma_h^\theta}e_h^{\theta}  \notag \\
=
	- \Big( \nabla_{\Gamma_h^\theta} e_h^\theta - \hat\nu_h^\theta (\hat\nu_h^\theta)^T (\nabla_{\Gamma_h^\theta} e_h^\theta)^T \Big) \nabla_{\Gamma_h^\theta}e_h^{\theta} 	\quad\mbox{(since $\partial_\theta^\bullet e_h^{\theta} =0$)} 
\end{align}
into the above expression of $J_6$, we obtain 
\begin{align}\label{estimate-J6}
J_6
&\le
\int_0^1  c\|\nabla_{\Gamma_h^\theta} e_h^\theta\|_{L^\infty(\Gamma_h^\theta)}
\|\nabla_{\Gamma_h^\theta} e_h^\theta\|_{L^2(\Gamma_h^\theta)}
\|e_h^\theta\|_{L^2(\Gamma_h^\theta)} \d\theta \notag \\
%&\le
%\int_0^1 c\|\nabla_{\Gamma_h^\theta} e_h^\theta\|_{L^\infty(\Gamma_h^\theta)}
%\|\nabla_{\Gamma_h^\theta} e_h^\theta\|_{L^2(\Gamma_h^\theta)}
%\|e_h^\theta\|_{L^2(\Gamma_h^\theta)} \d\theta\notag \\
&\le
\int_0^1 ch^2 
\|\nabla_{\Gamma_h^\theta} e_h^\theta\|_{L^2(\Gamma_h^\theta)}
\|e_h^\theta\|_{L^2(\Gamma_h^\theta)} \d\theta
\quad\mbox{(estimate \eqref{eq:assumed bounds - W1infty} is used)} \notag \\
&\le
\int_0^1 ch\|e_h^\theta\|_{L^2(\Gamma_h^\theta)}^2 \d\theta 
\quad\mbox{(inverse inequality)}\notag \\
&=
ch\|\bfe\|_{\bfM(\bfx^*)}^2 . 
\end{align}

Let ${\rm id}_{\Gamma_h^*}$ be the restriction of the identity function to the surface $\Gamma_h^*$, and note that the surface $\Gamma_h^\theta$ has parametrization ${\rm id}_{\Gamma_h^*}+\theta e_h^{*}$ defined on $\Gamma_h^*$. Hence, $\Gamma_h^\theta$ has parametrization ${\rm id}_{\Gamma_h^*}^l+\theta e_h^{*,l}$ defined on $\Gamma$. Let $\phi$ be a local parametrization of the surface $\Gamma$ in a chart, and let $\widetilde\phi =({\rm id}_{\Gamma_h^*}^l+\theta e_h^{*,l})  \circ \phi = {\rm id}_{\Gamma_h^*}^l\circ\phi + (\theta e_h^{*,l})  \circ \phi $. Then 
\begin{align*}
&\hat n_h^\theta \circ \widetilde\phi = \frac{ \partial_1 \widetilde\phi \times \partial_2\widetilde\phi}{|\partial_1 \widetilde\phi \times \partial_2\widetilde\phi|} 
%\\
\quad\mbox{and}\quad
%&
\hat n_h^{*,\theta} \circ \widetilde\phi
=\hat n_h^{*}\circ {\rm id}_{\Gamma_h^*}^l\circ\phi 
=  \frac{ \partial_1({\rm id}_{\Gamma_h^*}^l\circ\phi) \times \partial_2({\rm id}_{\Gamma_h^*}^l\circ\phi)}{|\partial_1({\rm id}_{\Gamma_h^*}^l\circ\phi)\times \partial_2({\rm id}_{\Gamma_h^*}^l\circ\phi)|} . 
\end{align*}
Since the exact surface is non-degenerate, we have $c_1\le |\partial_1\phi \times \partial_2\phi|\le c_2$. Hence, 
\begin{align*}
|\hat n_h^\theta \circ \widetilde\phi - \hat n_h^{*,\theta} \circ \widetilde\phi|
\le 
c| \partial_1 (\widetilde\phi-{\rm id}_{\Gamma_h^*}^l\circ\phi)| + c| \partial_2 (\widetilde\phi-{\rm id}_{\Gamma_h^*}^l\circ\phi)| 
&\le
c \theta |\nabla (e_h^{*,l} \circ \phi) | \\
&\le
c \theta |(\nabla_{\Gamma} e_h^{*,l}) \circ \phi | |\nabla \phi | .
\end{align*}
This implies that 
\begin{align}\label{ntheta-nstar}
\|\hat n_h^\theta - \hat n_h^{*,\theta} \|_{L^2(\Gamma_h^\theta)} 
%\le
%c\|\hat n_h^\theta \circ \widetilde\phi - \hat n_h^{*,\theta} \circ \widetilde\phi \|_{L^2(\Gamma)} 
\le
c \|\nabla_{\Gamma} e_h^{*,l} \|_{L^2(\Gamma)} 
\le
c \|\nabla_{\Gamma_h^*} e_h^{*} \|_{L^2(\Gamma_h^*)} 
&\le
ch^{-1} \|e_h^{*} \|_{L^2(\Gamma_h^*)} , 
\end{align}
where the second to last inequality is obtained from \eqref{grad-Gamma-Gamma_h}. 
By using the estimate above, we have 
%Since $X_h=X_h^* + e_h^*\circ X_h^*$, it follows that the flow map of $\Gamma_h^\theta$ is 
%$X_h^\theta=X_h^* + \theta e_h^*\circ X_h^*$. 
%Let $X_h^{*,l}$ and $X_h^{\theta,l}$ be the lift of $X_h^*$ and $X_h^{\theta}$ onto the exact surface $\Gamma(0)$, respectively, and let $\phi$ be a local parametrization of the surface $\Gamma(0)$ in a chart. By using the abbreviations $\widetilde X_h^*=X_h^{*,l}\circ\phi$ and $\widetilde X_h^\theta=X_h^{\theta,l}\circ\phi$, we have 
%$$
%\hat n_h^\theta  \circ \widetilde X_h^\theta  = \frac{ \partial_1 \widetilde X_h^\theta \times \partial_2 \widetilde X_h^\theta}{|\partial_1 \widetilde X_h^\theta \times \partial_2 \widetilde X_h^\theta|}
%\quad\mbox{and}\quad
%\hat n_h^{*,\theta} \circ \widetilde X_h^\theta = \frac{ \partial_1 \widetilde X_h^* \times \partial_2 \widetilde X_h^*}{|\partial_1 \widetilde X_h^*  \times \partial_2 \widetilde X_h^*|} .
%$$
%Since $X_h^{*,l}$ approximates $X$ in $W^{1,\infty}(\Gamma)$
%This implies that 
%$$
%|\hat n_h^\theta  \circ \widetilde X_h^\theta-\hat n_h^{*,\theta} \circ \widetilde X_h^\theta|
%\le
%c( |\partial_1 (\widetilde X_h^\theta-\widetilde X_h^\theta)|
%+|\partial_2 (\widetilde X_h^\theta- \widetilde X_h^\theta|)|
%$$
\begin{align}\label{estimate-J7}
J_7
&=\int_0^1 
\int_{\Gamma_h^\theta}  H_h^{*,\theta} e_h^\theta\cdot \nabla_{\Gamma_h^\theta} e_h^\theta \cdot (\hat n_h^\theta-\hat n_h^{*,\theta}) \,  
\d\theta \notag \\
&\le
\int_0^1  c\|e_h^\theta\|_{L^2(\Gamma_h^\theta)}
\|\nabla_{\Gamma_h^\theta} e_h^\theta\|_{L^\infty(\Gamma_h^\theta)} 
\|\hat n_h^\theta-\hat n_h^{*,\theta}\|_{L^2(\Gamma_h^\theta)} \d\theta \notag \\
&\le
\int_0^1 c\|e_h^\theta\|_{L^2(\Gamma_h^\theta)}
\|\nabla_{\Gamma_h^\theta} e_h^\theta\|_{L^\infty(\Gamma_h^\theta)} 
h^{-1} \|e_h^\theta\|_{L^2(\Gamma_h^\theta)} \d\theta 
\quad\mbox{(estimate \eqref{ntheta-nstar} is used)} \notag \\
&\le
\int_0^1 ch \|e_h^\theta\|_{L^2(\Gamma_h^\theta)}^2 \d\theta  \quad\mbox{(estimate \eqref{eq:assumed bounds - W1infty} is used)}  \notag \\
&\le ch \|\bfe\|_{\bfM(\bfx^*)}^2 .
\end{align}

Finally, 
\begin{align}
J_8
&=-
\int_0^1 
\int_{\Gamma_h^\theta}  H_h^{*,\theta} e_h^\theta\cdot (\nabla_{\Gamma_h^\theta} e_h^\theta) \hat n_h^\theta \,\d\theta \notag \\
&\le
c\int_0^1 \|e_h^\theta\|_{L^2(\Gamma_h^\theta)}
\|(\nabla_{\Gamma_h^\theta} e_h^\theta)  \hat n_h^\theta\|_{L^2(\Gamma_h^\theta)} \d\theta \notag \\
&\le
c\|\bfe\|_{\bfM(\bfx^*)} 
\bigg(\int_0^1 \int_{\Gamma_h^\theta} |(\nabla_{\Gamma_h^\theta} e_h^\theta) \hat n_h^\theta|^2 \d\theta\bigg)^{\frac12}  
\end{align}

Substituting the estimates of $J_m$, $m=1,\dots,8$, into \eqref{2nd-term-2}, we have 
\begin{align}\label{2nd-term-2-f}
-({\bf M}(\bfx )-{\bf M}(\bfx ^*))\dot \bfx ^*  \cdot \bfe  
&\le 
c\epsilon^{-1}\|\bfe\|_{\bfM(\bfx^*)}^2
+
\epsilon \int_0^1 \int_{\Gamma_h^\theta} |(\nabla_{\Gamma_h^\theta} e_h^\theta) \hat n_h^\theta|^2 \d\theta . 
\end{align} 
\begin{remark}
\upshape
The estimates \eqref{1st-term} and \eqref{2nd-term-2-f} together imply the result \eqref{mass-trick} mentioned in the introduction section. 
\end{remark}

Then, substituting \eqref{1st-term}--\eqref{3rd-term} and \eqref{2nd-term-2-f} into \eqref{FEM-Error-Eq-3}, we obtain 
\begin{align}\label{FEM-Error-Eq-4}
&\frac{\d}{\d t} \|\bfe\|_{\bfM(\bfx)}^2 
+ 2 \int_0^1 \int_{\Gamma_h^\theta} |(\nabla_{\Gamma_h^\theta} e_h^\theta) \hat n_h^\theta|^2 \d\theta \notag \\
&\le
ch^{2k-2} 
+ c\epsilon^{-1}\|\bfe\|_{\bfM(\bfx^*)}^2
+
2 \epsilon \int_0^1 \int_{\Gamma_h^\theta} |(\nabla_{\Gamma_h^\theta} e_h^\theta) \hat n_h^\theta|^2 \d\theta , 
\end{align} 
where $\epsilon$ can be an arbitrary positive number between $0$ and $1$. 
By choosing $\epsilon=\frac12$ and integrating the inequality above in time, we have 
\begin{align}\label{FEM-Error-Eq-5}
&\|\bfe(s)\|_{\bfM(\bfx)}^2 
+ \int_0^s \int_0^1 \int_{\Gamma_h^\theta} |(\nabla_{\Gamma_h^\theta} e_h^\theta) \hat n_h^\theta|^2 \d\theta \d t
\le
ch^{2k-2} 
+ c\int_0^s \|\bfe(t)\|_{\bfM(\bfx^*)}^2 \d t ,
\end{align} 
which holds for all $s\in(0,t^*]$. 
Since $\|\bfe(s)\|_{\bfM(\bfx)}$ is equivalent to $\|\bfe(s)\|_{\bfM(\bfx^*)}$, as explained in \eqref{norm-equiv}, applying Gronwall's inequality yields 
\begin{align*}
&\max_{t\in[0,t^*]} \|\bfe\|_{\bfM(\bfx^*)}^2 
+ \int_0^{t^*} \int_0^1 \int_{\Gamma_h^\theta} |(\nabla_{\Gamma_h^\theta} e_h^\theta) \hat n_h^\theta|^2 \d\theta \d t 
\le
ch^{2k-2}  .
\end{align*} 
Hence, 
\begin{align}\label{L2-eh}
&
\max_{t\in[0,t^*]} \| e_h(\cdot,t)\|_{L^2(\Gamma_h[\bfx^*(t)])} =
\max_{t\in[0,t^*]} \|\bfe\|_{\bfM(\bfx^*)}
\le
ch^{k-1}  .
\end{align} 
When $k\ge 6$ and $h$ sufficiently small, this implies 
\begin{align}\label{conclude-h4}
&\max_{t\in[0,t^*]} \| e_h(\cdot,t)\|_{L^2(\Gamma_h[\bfx^*(t)])} =
\max_{t\in[0,t^*]} \|\bfe\|_{\bfM(\bfx^*)}
\le
\frac12 h^{4}  .
\end{align} 
If $t^*<T$ then the inequality above furthermore implies that the solution can be extended to time $t^*+\epsilon_h$ for some sufficiently small $\epsilon_h$ such that \eqref{eq:assumed bounds - 2} holds. The maximality of $t^*$ for \eqref{eq:assumed bounds - 2} implies that $t^*=T$. 

Hence, \eqref{L2-eh} holds with $t^*=T$. This also implies, via inverse inequality, 
\begin{align*} 
\max_{t\in[0,T]} 
\| e_h(\cdot,t) \|_{L^\infty (\Gamma_h[\bfx^*(t)])} 
\le 
ch^{k-2} . 
\end{align*} 
This proves \eqref{x-xstar}. Since $X_h-X_h^*=e_h(\cdot,t)\circ X_h^*$, \eqref{L2-eh} also implies 
$$
\|X_h-X_h^*\|_{L^2(\Gamma_h(\bfx^0))} \le ch^{k-1} . 
$$
Lifting this onto $\Gamma_0$ yields 
$$
\|X_h^l-X_h^{*,l}\|_{L^2(\Gamma^0)} \le ch^{k-1} . 
$$
Then, using the triangle inequality and the interpolation error estimate \eqref{interpl-error}, we obtain
$$
\|X_h^l-X\|_{L^2(\Gamma^0)}
\le
\|X_h^l-X_h^{*,l}\|_{L^2(\Gamma^0)}
+\|X_h^{*,l}-X\|_{L^2(\Gamma^0)}
\le ch^{k-1} . 
$$
This completes the proof of Theorem \ref{MainTHM}. 
\hfill\endproof

\section{Proof of Lemma \ref{Lemma:key} }
\label{Section:Proof Lemma}

In this section we prove Lemma \ref{Lemma:key}, which is used in the proof of Theorem \ref{MainTHM}. Note that $\Gamma_h^\theta$ is the boundary of a bounded Lipschitz domain. We first prove the result for a smooth surface and then extend it to a general Lipschitz surface through approximating it by smooth surfaces. 

\begin{proposition}\label{prop:int_Gamma_star}
Let $\Gamma_{\!\star} $ be a bounded, closed and smooth surface and let $e\in H^1(\Gamma_{\!\star} )^3$. Then 
\begin{align}\label{int_Gamma_star}
\int_{\Gamma_{\!\star} } 
\Big[ {\rm tr}(\nabla_{\Gamma_{\!\star} }e)^2 - {\rm tr}(\nabla_{\Gamma_{\!\star} }e\nabla_{\Gamma_{\!\star} }e) \Big]
=0 . 
\end{align}
\end{proposition}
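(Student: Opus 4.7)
The plan is to express the integrand as the surface divergence of an explicit vector field on $\Gamma_\star$ plus correction terms, and then apply the surface divergence theorem on the closed surface. The ambient prototype is the classical null-Lagrangian identity in $\R^3$: for any $\tilde e\in C^2(\R^3,\R^3)$,
\[
(\nabla\cdot\tilde e)^2 - {\rm tr}((\nabla\tilde e)^2) = \nabla\cdot\tilde W, \qquad \tilde W_i=\tilde e_i\,(\nabla\cdot\tilde e)-\tilde e_j\,\partial_j\tilde e_i,
\]
which follows from the symmetry of mixed partial derivatives. My aim is to transplant this identity to $\Gamma_\star$, where the non-commutativity of tangential derivatives will produce curvature correction terms that must cancel globally.

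Concretely, I would define the surface vector field $V\colon\Gamma_\star\to\R^3$ by $V_i = e_i\,(\nabla_{\Gamma_\star}\cdot e) - e_j\,D_j e_i$, where $D_j$ denotes the $j$-th Cartesian component of $\nabla_{\Gamma_\star}$. A direct product-rule computation gives
\[
\nabla_{\Gamma_\star}\cdot V = (\nabla_{\Gamma_\star}\cdot e)^2 - {\rm tr}((\nabla_{\Gamma_\star}e)^2) + e_i\,D_i(\nabla_{\Gamma_\star}\cdot e) - e_j\,D_i D_j e_i .
\]
Integrating over $\Gamma_\star$ and applying the surface divergence theorem $\int_{\Gamma_\star}\nabla_{\Gamma_\star}\cdot V = \int_{\Gamma_\star} H\,(n\cdot V)$ reduces the proposition to showing that the two ``remainder'' integrals involving $e_i D_i(\nabla_{\Gamma_\star}\cdot e)$ and $e_j D_i D_j e_i$ combine with the mean-curvature term on the right to yield zero. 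I would handle these by one additional round of integration by parts on each remainder, using the commutator identity \eqref{eq:mat grad interchange} to express $[D_i,D_j]$ in terms of the Weingarten map of $\Gamma_\star$.

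The step I expect to be the main obstacle is verifying the clean cancellation of the curvature contributions: both the mean-curvature boundary terms produced by the surface divergence theorem and the Weingarten-map terms produced by interchanging the order of tangential derivatives must sum to zero against each other. This requires careful symbolic computation that exploits the identities $P\,\nabla_{\Gamma_\star}e = \nabla_{\Gamma_\star}e$ (each column of $\nabla_{\Gamma_\star}e$ lies in the tangent plane of $\Gamma_\star$) and the standard formulas for the tangential gradient of the unit normal $n$. Once the smooth case is settled, the general statement for $e\in H^1(\Gamma_\star)^3$ follows by approximating $e$ in $H^1$ by smooth vector fields on $\Gamma_\star$ and passing to the limit, since the integrand is a continuous quadratic form in the $H^1$ norm.
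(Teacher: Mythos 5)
Your strategy is essentially the paper's: both proofs rest on the null-Lagrangian structure of ${\rm tr}(\nabla_{\Gamma_{\!\star}}e)^2-{\rm tr}(\nabla_{\Gamma_{\!\star}}e\,\nabla_{\Gamma_{\!\star}}e)$, integration by parts on the closed surface with the mean-curvature term $\int_{\Gamma_{\!\star}}f\varphi Hn_i$, a commutator identity for tangential derivatives, and a density argument to pass from smooth (or $H^2$) fields to $H^1$. The only organizational difference is that you package the computation as the surface divergence of an explicit field $V$ plus remainders, whereas the paper expands the integrand into the three $2\times 2$ minors $\D_ie^i\D_je^j-\D_ie^j\D_je^i$ and integrates by parts twice on each; these are the same calculation.

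Two caveats. First, you defer exactly the step that constitutes the proof: the verification that the curvature terms cancel. They do, and cleanly — after one integration by parts, the commutator $\D_i\D_j u=\D_j\D_i u+Hn_i\D_j u-Hn_j\D_i u$, and a second integration by parts, every $H$-weighted term appears twice with opposite signs and the remaining terms cancel against $-\D_ie^j\D_je^i$; this is a four-line computation in the paper, so your plan would close without obstruction, but as written it is a plan rather than a proof. Second, the identity you cite, \eqref{eq:mat grad interchange}, is the interchange formula for the \emph{material derivative} and the tangential gradient along the deformation parameter $\theta$; the tool you actually need is the commutator of two tangential derivatives on a fixed surface (\cite[Lemma 2.6]{DziukElliott_acta}), which is a different statement.
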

\begin{proof}
We denote $\nabla_{\Gamma_{\!\star} } f = (\D_1f,\D_2f,\D_3f)^T$ and use the following formula of integration by parts (cf. \cite[Definition 2.11]{DziukElliott_acta}) 
\begin{align}\label{int-by-parts}
\int_{\Gamma_{\!\star} } f \D_i\varphi 
= - \int_{\Gamma_{\!\star} } \D_i f \varphi 
+\int_{\Gamma_{\!\star} } f \varphi Hn_i .
\end{align}
If $e =(e^1,e^2,e^3)^T \in H^2(\Gamma_{\!\star} )^3$, then 
\begin{align*}
&\int_{\Gamma_{\!\star} } 
\Big[ {\rm tr}(\nabla_{\Gamma_{\!\star} }e)^2 - {\rm tr}(\nabla_{\Gamma_{\!\star} }e\nabla_{\Gamma_{\!\star} }e) \Big] \\
&=\int_{\Gamma_{\!\star} } 
\bigg[  (\D_1e^1+ \D_2e^2+\D_3e^3)^2
-  \D_ie^j \D_je^i \bigg]  \\
&=\int_{\Gamma_{\!\star} } (\D_1e^1+ \D_2e^2+\D_3e^3)^2 \\
&\quad\,
-  \int_{\Gamma_{\!\star} } (|\D_1e^1|^2 + |\D_2e^2|^2 + |\D_3e^3|^2+2\D_1e^2\D_2e^1 +2 \D_1e^3\D_3e^1+2\D_2e^3\D_3e^2)  \\
&= \int_{\Gamma_{\!\star} } 2\bigg[ 
(\D_1e^1\D_2e^2 -\D_1e^2\D_2e^1)
 + (\D_1e^1\D_3e^3 - \D_1e^3\D_3e^1)
 + (\D_2e^2\D_3e^3 - \D_2e^3\D_3e^2)
\bigg] .
\end{align*} 
By using \eqref{int-by-parts} and the formula (cf. \cite[Lemma2.6]{DziukElliott_acta}): 
$$
\D_i\D_j u = 
\D_j \D_i  u
+ Hn_i \D_ju - Hn_j \D_iu ,
$$
we have 
\begin{align*}
&\int_{\Gamma_{\!\star} } (\D_ie^i\D_je^j -\D_ie^j\D_je^i) \\
& = \int_{\Gamma_{\!\star} } (-e^i\D_i\D_je^j + H n_i e^i\D_je^j -\D_ie^j\D_je^i) \\
& = \int_{\Gamma_{\!\star} } (-e^i\D_j\D_ie^j -  Hn_i  e^i \D_je^j  + Hn_j e^i \D_ie^j   + H n_i e^i\D_je^j -\D_ie^j\D_je^i) \\
& = \int_{\Gamma_{\!\star} } (\D_je^i\D_ie^j -  Hn_j e^i  \D_ie^j  - Hn_i  e^i \D_je^j + Hn_j e^i \D_ie^j   + H n_i   e^i \D_je^j -\D_ie^j\D_je^i) \\
& = 0 .
\end{align*} 
This proves \eqref{int_Gamma_star} for $e\in H^2(\Gamma_{\!\star})^3$. Since $H^2(\Gamma_{\!\star})^3$ is dense in $H^1(\Gamma_{\!\star})^3$, it follows that \eqref{int_Gamma_star} also holds for $e\in H^1(\Gamma_{\!\star})^3$. 
\hfill\end{proof}

By using Proposition \ref{prop:int_Gamma_star}, we prove the following result, which implies Lemma \ref{Lemma:key}. 
 
\begin{proposition}\label{Lemma:int_Lipscthiz_Gamma}
If $\Gamma_{\!\star}$ is the boundary of a bounded Lipschitz domain $\Omega$, then for $e\in H^1(\Gamma_{\!\star} )^3$ the following identity holds:
\begin{align}\label{int_Lipscthiz_Gamma}
\int_{\Gamma_{\!\star} } 
\Big[ {\rm tr}(\nabla_{\Gamma_{\!\star} }e)^2 - {\rm tr}(\nabla_{\Gamma_{\!\star} }e\nabla_{\Gamma_{\!\star} }e) \Big]
=0 . 
\end{align}
\end{proposition}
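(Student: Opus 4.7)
The natural strategy is to reduce the Lipschitz case to the smooth case established in Proposition \ref{prop:int_Gamma_star} through an approximation argument. The plan is: first approximate $\Gamma_{\!\star}$ by smooth closed surfaces $\Gamma_n$, transfer a sufficiently regular $e$ to $\Gamma_n$, apply the smooth identity, and pass to the limit; then extend the resulting identity from Lipschitz functions to all of $H^1(\Gamma_{\!\star})^3$ by density together with continuity of the quadratic form.

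Because $\Omega$ is a bounded Lipschitz domain, a standard regularization procedure (mollifying the local Lipschitz defining functions via a finite partition of unity, or flowing along a smoothed outward-normal field in a tubular neighborhood of $\Gamma_{\!\star}$) produces $C^\infty$ domains $\Omega_n$ with smooth closed boundaries $\Gamma_n := \partial \Omega_n$, together with bi-Lipschitz homeomorphisms $\Phi_n : \Gamma_{\!\star} \to \Gamma_n$ satisfying $\Phi_n \to \mathrm{id}$ uniformly, $\mathrm{Lip}(\Phi_n) + \mathrm{Lip}(\Phi_n^{-1})$ uniformly bounded in $n$, Jacobians $J_n \to 1$ almost everywhere, and pulled-back unit normals $\nu_n \to \nu$ almost everywhere on $\Gamma_{\!\star}$. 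I would then set $e_n := e \circ \Phi_n^{-1} \in H^1(\Gamma_n)^3$ and apply Proposition \ref{prop:int_Gamma_star} on the smooth surface $\Gamma_n$ to obtain
\begin{equation*}
\int_{\Gamma_n}\Big[\mathrm{tr}(\nabla_{\Gamma_n} e_n)^2 - \mathrm{tr}(\nabla_{\Gamma_n} e_n \, \nabla_{\Gamma_n} e_n)\Big] = 0 \quad \text{for every } n.
\end{equation*}
Pulling this identity back to $\Gamma_{\!\star}$ via $\Phi_n$ writes it as an integral over $\Gamma_{\!\star}$ of a quadratic form in $(\nabla_{\Gamma_n} e_n)\circ \Phi_n$, weighted by $J_n$, and one then wants to send $n \to \infty$.

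The main obstacle is the $L^2$-convergence $(\nabla_{\Gamma_n} e_n)\circ \Phi_n \to \nabla_{\Gamma_{\!\star}} e$ in $L^2(\Gamma_{\!\star})^{3\times 3}$, because the tangential gradient involves the projection $P_n = I - \nu_n \nu_n^T$, and the approximating normals $\nu_n$ converge only almost everywhere, not uniformly. To bypass this I would first prove the identity assuming $e \in C^1(\overline{\Omega})^3$ (or merely Lipschitz): the uniform $L^\infty$-bound on the ambient derivatives of such $e$, together with the a.e.\ convergence $(P_n, \nabla \Phi_n) \to (P_{\!\star}, I)$ and the uniform bi-Lipschitz control, yields the required $L^2$-convergence via dominated convergence, proving the identity in the Lipschitz class. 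Since the symmetric bilinear form $(e, f) \mapsto \int_{\Gamma_{\!\star}}[\mathrm{tr}(\nabla_{\Gamma_{\!\star}} e)\, \mathrm{tr}(\nabla_{\Gamma_{\!\star}} f) - \mathrm{tr}(\nabla_{\Gamma_{\!\star}} e \, \nabla_{\Gamma_{\!\star}} f)]$ is continuous on $H^1(\Gamma_{\!\star})^3 \times H^1(\Gamma_{\!\star})^3$ by Cauchy--Schwarz, the identity extends from Lipschitz $e$ to all of $H^1(\Gamma_{\!\star})^3$ by density.
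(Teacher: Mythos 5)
Your proposal is correct and follows essentially the same route as the paper: reduce to the smooth case of Proposition \ref{prop:int_Gamma_star} by approximating the Lipschitz boundary by smooth boundaries (the paper invokes Doktor's theorem in local graph coordinates, which is exactly your mollification of local defining functions), prove the identity first for functions with uniformly bounded ambient derivatives so that the merely a.e.\ convergence of the normals can be handled (the paper via $L^p$-convergence of the metric tensors, you via dominated convergence), and then pass to general $e\in H^1(\Gamma_{\!\star})^3$ by density and $H^1$-continuity of the quadratic form. The differences are only in technical packaging (global bi-Lipschitz homeomorphisms versus a partition of unity with explicit Riemannian metric computations), not in the underlying argument.
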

\begin{proof}
In the following, we show that there exists a sequence of smooth functions $\tilde w^n\in C^{\infty}(\R^3)^3$ such that $\tilde w^n$ converges to $e$ in $H^1(\Gamma_{\!\star})$ as $n\rightarrow\infty$, and a sequence of smooth domains $\Omega_m$ with smooth boundary $\Gamma_{\!\star}^m$ such that $\Gamma_{\!\star}^m\rightarrow \Gamma_{\!\star}$ as $m\rightarrow\infty$. By using the result of Proposition \ref{prop:int_Gamma_star}, we have
\begin{align}\label{key-W1infty-R3-m}
\int_{\Gamma_{\!\star}^m } 
\Big[ {\rm tr}(\nabla_{\Gamma_{\!\star}^m }\tilde w^n)^2 - {\rm tr}(\nabla_{\Gamma_{\!\star}^m }\tilde w^n\nabla_{\Gamma_{\!\star}^m }\tilde w^n) \Big]
=0 . 
\end{align}
By taking $m\rightarrow \infty$ in the equality above, we shall prove the following result: 
%(as a result of \cite[Theorem 8.3.1, Property (4)]{Daners-2008})
\begin{align}\label{key-W1infty-R3}
\int_{\Gamma_{\!\star}} 
\Big[ {\rm tr}(\nabla_{\Gamma_{\!\star}}\tilde w^n)^2 - {\rm tr}(\nabla_{\Gamma_{\!\star}}\tilde w^n\nabla_{\Gamma_{\!\star}}\tilde w^n) \Big]
=0 . 
\end{align}
This would prove the desired result for the smooth function $\tilde w^n\in W^{1,\infty}(\R^3)^3$. Since $\tilde w^n\rightarrow e$ in $H^1(\Gamma_{\!\star})$, letting $n\rightarrow 0$ in \eqref{key-W1infty-R3} yields the desired result \eqref{int_Lipscthiz_Gamma}. 

%It remains to prove the existence of functions $\tilde w^n\in W^{1,\infty}(\R^3)^3$ that converge to $e$ in $H^1(\Gamma_{\!\star})$. 
First, we consider a partition of unity $\phi_j\in C^\infty_0(\R^3)$, $j=1,\dots,J$, such that 
$\sum_{j=1}^J \phi_j = 1 $ in a neighborhood of $\Gamma_\star$ and each $\phi_j$ has compact support in an open ball $B_j$ in which the surface $\Gamma_\star\cap B_j$ can be represented by a Lipschitz graph after a rotation $Q_j$:
\begin{align}
&\Gamma_\star\cap B_j=\{Q_jx : x_3=\varphi_j(x_1,x_2),\,\,\,(x_1,x_2) \in D_j \} ,\\
&B_j\cap \Omega\subset \{Q_jx : x_3>\varphi_j(x_1,x_2),\,\,\,(x_1,x_2) \in D_j \} , \\
&B_j\backslash\overline \Omega\subset \{Q_jx : x_3<\varphi_j(x_1,x_2),\,\,\,(x_1,x_2) \in D_j \} ,
\end{align}
where $\varphi_j$ is a Lipschitz continuous function on $D_j$, which is a bounded domain in $\R^2$. Hence, 
$$
e=\sum_{j=1}^J e\phi_j\quad\mbox{on}\quad \Gamma_\star .
$$ 
For the Lipschitz domain $\Omega$, there exists a sequence of domains $\Omega_m$, $m=1,2,\dots$, with smooth boundary $\Gamma_{\!\star}^m$ such that $\Gamma_{\!\star}^m\rightarrow \Gamma_{\!\star}$ as $m\rightarrow\infty$ in the following sense (see \cite[Theorem 5.1]{Doktor-1976}): 
\begin{align}\label{define-varphij-m}
&\Gamma_\star^m\cap B_j=\{Q_jx : x_3=\varphi_j^m(x_1,x_2),\,\,\,(x_1,x_2) \in D_j \} ,
\end{align} 
where $\varphi_j^m$, $m=1,2,\dots$, is a sequence of functions converging to $\varphi_j$ strongly in both $L^\infty(D_j)$ and $W^{1,p}(D_j)$ for all $p\in[1,\infty)$, and $\nabla\varphi_j^m$ converges to $\nabla\varphi_j$ weakly$^*$ in $L^\infty(D_j)^3$ ($\nabla\varphi_j^m$ is bounded in $L^\infty(D_j)^3$ as $m\rightarrow \infty$). 

Next, on the two-dimensional region $D_j$, we define $\Phi_j(x_1,x_2)=(x_1,x_2,\varphi_j(x_1,x_2))^T \in\R^3$ and 
\begin{align}\label{definition-wj}
w_j(x_1,x_2) = (e\phi_j) \circ (Q_j\Phi_j)(x_1,x_2) \quad\mbox{for}\,\,\, (x_1,x_2) \in D_j . 
\end{align}
Then $Q_j\Phi_j:D_j\rightarrow \Gamma_\star\cap B_j$ is a parametrization of $\Gamma_\star\cap B_j$ and $w_j\in H^1_0(D_j)^3$. We can approximate $w_j$ in $H^1(D_j)^3$ by a sequence of smooth functions $w_j^n\in C^{\infty}(\R^2)^3$ with compact supports inside $D_j$. These functions have natural extensions to $\overline  w_j^n\in C^{\infty}(\R^3)^3$, i.e., 
\begin{align}\label{definition-wj2}
\overline w_j^n (x_1,x_2,x_3)=w_j^n (x_1,x_2) \chi_\alpha(x_3) \quad\mbox{for}\,\,\, (x_1,x_2,x_3)\in\R^3 , 
\end{align}
where $\chi_\alpha(x_3)$ is a one-dimensional smooth cut-off function which satisfies 
\begin{align}\label{property-chi}
\chi_\alpha(0)=1,
\quad\chi_\alpha'(0)=0\quad\mbox{and}\quad \chi_\alpha(x_3)=0\quad\mbox{for}\quad
|x_3|>\alpha. 
\end{align}
Then we can define a smooth function $\hat w_j^n\in  C^{\infty}(\R^3)^3$ (with compact support in $B_j$) that approximates $e\phi_j$ in $H^1(\Gamma_\star\cap B_j)$, i.e., 
\begin{align}\label{definition-wj3}
\tilde w_j^n(Q_jx)
=\overline w_j^n(x_1,x_2,x_3 - \varphi_j^n(x_1,x_2)) \quad\mbox{for}\,\,\, (x_1,x_2,x_3)^T\in\R^3 . 
\end{align}
By choosing a sufficiently small $\alpha$, the extended functions $\tilde w_j^n\in C^{\infty}(\R^3)^3$ have compact supports in $B_j$. 
Since $Q_j\Phi_j:D_j\rightarrow \Gamma_\star\cap B_j$ is a parametrization of $\Gamma_\star\cap B_j$, it follows that ``$\tilde w_j^n$ converges to $e\phi_j$ in $H^1(\Gamma_\star\cap B_j)$'' if and only if ``$\tilde w_j^n\circ (Q_j\Phi_j)$ converges to $(e\phi_j)\circ(Q_j\Phi_j)$ in $H^1(D_j)$''. 
In view of the definitions in \eqref{definition-wj}--\eqref{definition-wj2} and \eqref{definition-wj3}, we have 
\begin{align}\label{tilde-wjn-ephij}
&\tilde w_j^n \circ (Q_j\Phi_j)(x_1,x_2) - (e\phi_j) \circ (Q_j\Phi_j)(x_1,x_2) \notag\\
&= w_j^n (x_1,x_2) \chi_\alpha(\varphi_j(x_1,x_2) - \varphi_j^n(x_1,x_2))  - w_j(x_1,x_2) \notag\\
&= w_j^n (x_1,x_2) [\chi_\alpha(\varphi_j(x_1,x_2) - \varphi_j^n(x_1,x_2))-1] 
+ [w_j^n (x_1,x_2) - w_j(x_1,x_2) ] . 
\end{align}
Since $\varphi_j^n$ converges to $\varphi_j$ in $L^\infty(D_j)\cap W^{1,p}(D_j)$ as $n\rightarrow\infty$ for arbitrary $p\in[1,\infty)$ (see the statement below \eqref{define-varphij-m}), and $w_j^n$ converges to $w_j$ in $H^{1}(D_j)\hookrightarrow L^p(D_j)$ for all $p\in[1,\infty)$ (this is how $w_j^n$ is defined), from \eqref{tilde-wjn-ephij} it is straightforward to verify that $\tilde w_j^n\circ (Q_j\Phi_j)$ converges to $(e\phi_j)\circ(Q_j\Phi_j)$ in $H^1(D_j)$. As a result, $\tilde w_j^n$ converges to $e\phi_j$ in $H^1(\Gamma_\star\cap B_j)$. 
Therefore, 
$$
\tilde w^n=\sum_{j=1}^J \tilde w_j^n ,\quad n=1,2,\dots,
$$
is a sequence of functions in $C^{\infty}(\R^3)^3$ that converges to $e=\sum_{j=1}^Je\phi_j$ in $H^1(\Gamma_{\!\star})$ as $n\rightarrow\infty$.

Finally, we prove that taking $m\rightarrow \infty$ in \eqref{key-W1infty-R3-m} would yield \eqref{key-W1infty-R3}. This would complete the proof of Proposition \ref{Lemma:int_Lipscthiz_Gamma}. 
To this end, we consider the decomposition 
\begin{align}\label{key-W1infty-R3-01} 
& \int_{\Gamma_{\!\star}^m } 
\Big[ {\rm tr}(\nabla_{\Gamma_{\!\star}^m }\tilde w^n)^2 - {\rm tr}(\nabla_{\Gamma_{\!\star}^m }\tilde w^n\nabla_{\Gamma_{\!\star}^m }\tilde w^n) \Big] \notag\\
&=  \sum_{j=1}^J \int_{\Gamma_{\!\star}^m\cap B_j} {\rm tr}(\nabla_{\Gamma_{\!\star}^m }\tilde w^n)^2 \phi_j 
- \sum_{j=1}^J \int_{\Gamma_{\!\star}^m\cap B_j} {\rm tr}(\nabla_{\Gamma_{\!\star}^m }\tilde w^n\nabla_{\Gamma_{\!\star}^m }\tilde w^n) \phi_j 
\end{align}
and prove the following two results: 
\begin{align}
&\lim_{m\rightarrow 0} \int_{\Gamma_{\!\star}^m\cap B_j} {\rm tr}(\nabla_{\Gamma_{\!\star}^m }\tilde w^n)^2 \phi_j 
= \int_{\Gamma_{\!\star}\cap B_j} {\rm tr}(\nabla_{\Gamma_{\!\star} }\tilde w^n)^2 \phi_j 
&&\mbox{for every $j$} , \label{key-W1infty-R3-1}\\
&\lim_{m\rightarrow 0} \int_{\Gamma_{\!\star}^m\cap B_j} {\rm tr}(\nabla_{\Gamma_{\!\star}^m }\tilde w^n\nabla_{\Gamma_{\!\star}^m }\tilde w^n) \phi_j 
= \int_{\Gamma_{\!\star}\cap B_j}
{\rm tr}(\nabla_{\Gamma_{\!\star} }\tilde w^n\nabla_{\Gamma_{\!\star} }\tilde w^n) \phi_j 
&&\mbox{for every $j$} .  \label{key-W1infty-R3-1-2}
\end{align}

Let $\Phi_j^m(x_1,x_2)=(x_1,x_2,\varphi_j^m(x_1,x_2))^T \in\R^3$. Then $\Phi_j^m$ is a parametrization of the surface $\Gamma_\star^m\cap B_j$ after a rotation by $Q_j$. By using this parametrization, the left-hand side of \eqref{key-W1infty-R3-1} can be written as 
\begin{align}\label{integral-m} 
&\int_{\Gamma_{\!\star}^m\cap B_j} {\rm tr}(\nabla_{\Gamma_{\!\star}^m }\tilde w^n)^2 \phi_j \\
&=  \int_{D_j}  {\rm tr} \Big(\sum_{i,\ell=1}^2 g^{i\ell}(\nabla\Phi_j^m) \frac{\partial \tilde w^n(Q_j\Phi_j^m)}{\partial x_\ell} \otimes \partial_{x_i}\Phi_j^m \Big)^2 (\phi_j\circ \Phi_j^m) 
%\sqrt{\det [g_{i\ell}(\nabla\Phi_j^m)]} \,  \d x_1\d x_2 . \notag 
\sqrt{1+|\nabla \varphi_j^m|^2} \, \d x_1\d x_2 . \notag 
\end{align}
where $g^{i\ell}(\nabla\Phi_j^m)$ is the inverse matrix of the Riemannian metric tensor $g_{i\ell}(\nabla\Phi_j^m)$, i.e., 
$$
g_{i\ell}(\nabla\Phi_j^m) =\partial_{x_i}\Phi_j^m\cdot \partial_{x_\ell}\Phi_j^m , \quad 
i,\ell=1,2 .
$$ 
Since $\Phi_j^m$ converges to $\Phi_j$ in $L^\infty(D_j)\cap W^{1,p}(D_j)$ as $m\rightarrow\infty$ for all $p\in[1,\infty)$, it follows that $g_{i\ell}(\nabla\Phi_j^m)$ converges to $g_{i\ell}(\nabla\Phi_j)$ in $L^p(D_j)$ for all $p\in[1,\infty)$. Furthermore, since 
$$
\det(g_{i\ell}(\nabla\Phi_j^m)) 
=1+|\nabla\varphi_j^m|^2 
$$
is bounded from both below and above (because $\nabla\varphi_j^m$ is bounded in $L^\infty(D_j)^3$ as $m\rightarrow \infty$), it follows that the inverse matrix $g^{i\ell}(\nabla\Phi_j^m)$ also converges, i.e.,  
\begin{align}\label{converg-gil} 
\mbox{$g^{i\ell}(\nabla\Phi_j^m)$ converges to $g^{i\ell}(\nabla\Phi_j)$ in $L^p(D_j)$ for all $p\in[1,\infty)$
as $m\rightarrow \infty$.} 
\end{align}
Note that  
\begin{align*} 
\frac{\partial \tilde w^n(Q_j\Phi_j^m(x_1,x_2))}{\partial x_\ell} 
&= \Big( \frac{\partial \tilde w^n}{\partial x_q} \circ (Q_j\Phi_j^m)(x_1,x_2) \Big)
Q_{j,q}\frac{\partial\Phi_j^m(x_1,x_2)}{\partial x_\ell} , \\
\frac{\partial \tilde w^n(Q_j\Phi_j(x_1,x_2))}{\partial x_\ell} 
&= \Big( \frac{\partial \tilde w^n}{\partial x_q} \circ (Q_j\Phi_j)(x_1,x_2) \Big)
Q_{j,q}\frac{\partial\Phi_j(x_1,x_2)}{\partial x_\ell} , 
\end{align*} 
where $Q_{j,q}$ denotes the $q$th row of $Q_j$. Since $\displaystyle \frac{\partial \tilde w^n}{\partial x_q}\in C^\infty(\R^3)^3$ for fixed $n$ and $\Phi_j^m$ converges to $\Phi_j$ in $L^\infty(D_j)\cap W^{1,p}(D_j)$ for all $p\in[1,\infty)$ as $m\rightarrow\infty$, it follows that 
\begin{align}\label{converg-dwn} 
\mbox{$\displaystyle \frac{\partial [\tilde w^n\circ (Q_j\Phi_j^m)]}{\partial x_\ell} $ converges to $\displaystyle\frac{\partial [\tilde w^n\circ (Q_j\Phi_j)]}{\partial x_\ell} $ in $L^p(D_j)$ for all $p\in[1,\infty)$ as $m\rightarrow \infty$.} 
\end{align}
Since $\phi_j$ is smooth and $\Phi_j^m$ converges to $\Phi_j$ in $L^\infty(D_j)$ as $m\rightarrow\infty$, it follows that 
\begin{align}\label{converg-phij-varphij} 
\mbox{$\phi_j\circ\Phi_j^m$ converges to $\phi_j\circ\Phi_j$ in $L^\infty(D_j)$ as $m\rightarrow \infty$. } 
\end{align}
Then, substituting \eqref{converg-gil}, \eqref{converg-dwn} and \eqref{converg-phij-varphij} into the right-hand side of \eqref{integral-m} and taking limit $m\rightarrow\infty$, we obtain \eqref{key-W1infty-R3-1}. 
The proof of \eqref{key-W1infty-R3-1-2} is similar and omitted. 

Substituting \eqref{key-W1infty-R3-1}--\eqref{key-W1infty-R3-1-2} into \eqref{key-W1infty-R3-01} yields the desired result \eqref{key-W1infty-R3}. This completes the proof of Proposition \ref{Lemma:int_Lipscthiz_Gamma}. 
\hfill\end{proof}

\section{Proof of the defect's estimate (\ref{defect-estimate})}\label{section:defect}

In this section we prove (\ref{defect-estimate}), which is used in the proof of Theorem \ref{MainTHM}. We rewrite equation \eqref{PDE-X-P} into  
\begin{align}\label{PDE-X-P-2}
&\partial_t^{\bullet}{\rm id} = \Delta_{\Gamma[X(\cdot,t)]}{\rm id} \quad\mbox{on}\,\,\,\Gamma[X(\cdot,t)],\,\,\,\forall\, t\in(0,T]. 
\end{align}
Let $w_h\in S_h(\Gamma_h[\bfx^*])$ be a finite element function on the interpolated surface $\Gamma_h[\bfx^*]$, and let $w_h^l\in H^1(\Gamma)$ be the lift of $w_h$ onto the exact surface $\Gamma=\Gamma[X(\cdot,t)]$. Then, testing \eqref{PDE-X-P-2} by $w_h^l$, we obtain
\begin{align}\label{PDE-weak}
&\int_{\Gamma} \partial_t^{\bullet}{\rm id} \cdot w_h^l 
+ \int_{\Gamma}  \nabla_{\Gamma}{\rm id}\cdot\nabla_{\Gamma} w_h^l = 0 \quad\forall\, w_h\in S_h(\Gamma_h[\bfx^*]). 
\end{align}
This can be furthermore written as
\begin{align}\label{PDE-weak-h}
&\int_{\Gamma_h^*} \partial_{t,h}^{\bullet}{\rm id} \cdot w_h
+ \int_{\Gamma_h^*}  \nabla_{\Gamma_h^*}{\rm id}\cdot\nabla_{\Gamma_h^*} w_h 
= \int_{\Gamma_h^*} d_h \cdot w_h , \quad\forall\, w_h\in S_h(\Gamma_h[\bfx^*]) , 
\end{align}
where $d_h\in S_h(\Gamma_h^*)$ is the unique finite element function determined by the relation 
\begin{align*} 
\int_{\Gamma_h^*} d_h \cdot w_h 
=&
\bigg( \int_{\Gamma_h^*} \partial_{t,h}^{\bullet}{\rm id} \cdot w_h
- \int_{\Gamma} \partial_t^{\bullet}{\rm id} \cdot w_h^l  \bigg) \\
&
+ \bigg( \int_{\Gamma_h^*}  \nabla_{\Gamma_h^*}{\rm id}\cdot\nabla_{\Gamma_h^*} w_h 
-\int_{\Gamma}  \nabla_{\Gamma}{\rm id}\cdot\nabla_{\Gamma} w_h^l \bigg) \\
=&\!: \mathcal{E}_1(w_h)+\mathcal{E}_2(w_h) . 
\end{align*}
In the matrix-vector form, \eqref{PDE-weak-h} can be equivalently written as 
\begin{align} 
&{\bf M}(\bfx^*)\dot \bfx^* + {\bf A}(\bfx^*)\bfx^*
= {\bf M}(\bfx^*) {\bf d} ,
\end{align}
with ${\bf d}$ being the nodal vector of the finite element function $d_h\in S_h(\Gamma_h^*)$. 

Note that $\partial_{t,h}^{\bullet}{\rm id}=v_h^*$ on $\Gamma_h^*$ and $\partial_{t}^{\bullet}{\rm id}=v$ on $\Gamma$, where $v_h^*$ and $v$ are the velocity of the surfaces $\Gamma_h^*$ and $\Gamma$, respectively. In particular, $v_h^*$ is the Lagrange interpolation of $v$. Hence, by using \eqref{interpl-error} and \eqref{delta_h-1},
\begin{align*} 
\mathcal{E}_1(w_h)
&=
\int_{\Gamma_h^*} v_h^* \cdot w_h
- \int_{\Gamma} v\cdot w_h^l \\
&=
\bigg( \int_{\Gamma_h^*} v_h^* \cdot w_h - \int_{\Gamma} v_h^{*,l} \cdot w_h^l \bigg) 
+ \int_{\Gamma} (v_h^{*,l} - v) \cdot w_h^l \\
&=
 \int_{\Gamma_h^*} (1-\delta_h) v_h^* \cdot w_h 
+ \int_{\Gamma} (v_h^{*,l} - v) \cdot w_h^l \\
&\le
ch^{k+1}\|v_h^*\|_{L^2(\Gamma_h^*)}\|w_h\|_{L^2(\Gamma_h^*)} 
+ch^{k+1}\|w_h^l\|_{L^2(\Gamma)} \\
&\le
ch^{k+1}\|w_h\|_{L^2(\Gamma_h^*)} .
\end{align*}
Let ${\rm id}_{\Gamma_h^*}$ and ${\rm id}_{\Gamma}$ be the identity function restricted to $\Gamma_h^*$ and $\Gamma$, respectively, and let ${\rm id}_{\Gamma_h^*}^l$ be the lifted function on $\Gamma$. Then  
\begin{align*} 
\mathcal{E}_2(w_h)
=&
\int_{\Gamma_h^*}  \nabla_{\Gamma_h^*}{\rm id}_{\Gamma_h^*}\cdot\nabla_{\Gamma_h^*} w_h 
-\int_{\Gamma}  \nabla_{\Gamma}{\rm id}_{\Gamma}\cdot\nabla_{\Gamma} w_h^l \\
=&
\bigg( \int_{\Gamma_h^*}  \nabla_{\Gamma_h^*}{\rm id}_{\Gamma_h^*}\cdot\nabla_{\Gamma_h^*} w_h 
-\int_{\Gamma}  \nabla_{\Gamma}{\rm id}_{\Gamma_h^*}^l\cdot\nabla_{\Gamma} w_h^l \bigg) 
+ \int_{\Gamma}  \nabla_{\Gamma}({\rm id}_{\Gamma_h^*}^l - {\rm id}_{\Gamma}) \cdot\nabla_{\Gamma} w_h^l \\
\le &
ch^{k+1}\|\nabla_{\Gamma_h^*}{\rm id}_{\Gamma_h^*}\|_{L^2(\Gamma_h^*)} \|\nabla_{\Gamma_h^*}w_h\|_{L^2(\Gamma_h^*)}
+ch^{k} \|\nabla_{\Gamma_h^*}w_h\|_{L^2(\Gamma_h^*)} \\
\le &
ch^{k}\|\nabla_{\Gamma_h^*}{\rm id}_{\Gamma_h^*}\|_{L^2(\Gamma_h^*)} \|w_h\|_{L^2(\Gamma_h^*)}
+ch^{k-1} \|w_h\|_{L^2(\Gamma_h^*)} ,
\end{align*}
where the second to last inequality again uses \cite[Lemma 5.2]{Kovacs2018}. 
This proves that 
$$
\bigg| \int_{\Gamma_h^*} d_h \cdot w_h\bigg|
\le
ch^{k-1} \|w_h\|_{L^2(\Gamma_h^*)} .
$$
In the matrix-vector form, this can be equivalently written as 
$$
|{\bf M}(\bfx^*) {\bf d} \cdot {\bf w} |
\le ch^{k-1} \|{\bf w}\|_{{\bf M}(\bfx^*)} 
$$
Hence, by choosing ${\bf w}={\bf d}$ in the inequality above, we obtain 
$$
\|{\bf d}\|_{{\bf M}(\bfx^*)} 
\le ch^{k-1} .
$$
This proves the defect's estimate \eqref{defect-estimate}. 
\hfill\endproof

\section{Concluding remarks}

The main contribution of this paper is the discovery of the structure \eqref{monotone} and its application to proving the convergence of Dziuk's semidiscrete FEM for mean curvature flow of closed surfaces with sufficiently high-order finite elements. 

The following additional difficulty would appear in the analysis of linearly implicit time discretisation: 
\begin{align}\label{monotone2}
\big({\bf A}(\bfx^{n-1})\bfx^n -{\bf A}(\bfx^{*,n-1})\bfx^{*,n}\big)\cdot(\bfx^n -\bfx^{*,n}) 
\end{align}
is no longer in the form of the left-hand side of \eqref{monotone} due to the shift of superscript indices. Hence, additional terms would appear in converting \eqref{monotone2} to the form of the left-hand side of \eqref{monotone}. Those additional terms may be bounded by using the approach in \cite{Li-2020-SINUM} under a certain grid-ratio condition.

It is straightforward to verify that both \eqref{idea-surface-2} and Proposition \ref{prop:int_Gamma_star} can be extended to higher dimensions, i.e., for mean curvature flow of $d$-dimensional hypersurfaces in $\R^{d+1}$  with $d\ge 2$. As a result, the monotone structure and the convergence proof can be generalised to this case. However, the monotone structure of mean curvature flow of two-dimensional surfaces in higher codimension is not obvious from the current proof, and therefore the convergence of evolving surface FEMs in this case still remains open. 

Convergence of Dziuk's semidiscrete FEM with low-order finite elements, as well as the parametric FEMs of Barrett, Garcke \& N{\"u}rnberg \cite{BGN2008,Barrett-2010-4}, remain open for mean curvature flow of closed surfaces. 
Efficient numerical methods for the non-divergence parabolic system constructed from DeTurck's trick in \cite{Elliott-Fritz-2017}, allowing singularity to appear in the numerical simulation of closed surfaces, is still challenging.   

\bigskip

\section*{Acknowledgement}
I would like to thank Prof. Christian Lubich for reading the manuscript and providing many valuable comments and suggestions.

\renewcommand{\refname}{\bf References\vspace{-5pt}}

\bibliographystyle{abbrv}
\bibliography{evolving_surface_literature}

\begin{thebibliography}{10}

\bibitem{2019-Barrett-Deckelnick-Nurnberg}
J.~W. Barrett, K.~Deckelnick, and R.~N{\"u}rnberg.
\newblock {A finite element error analysis for axisymmetric mean curvature
  flow}.
\newblock {\em arXiv:1911.05398}, 2019.

\bibitem{Barrett-Deckelnick-Styles-2017}
J.~W. Barrett, K.~Deckelnick, and V.~Styles.
\newblock {Numerical analysis for a system coupling curve evolution to reaction
  diffusion on the curve}.
\newblock {\em SIAM J. Numer. Anal.}, 55(2):1080--1100, 2017.

\bibitem{BGN2008}
J.~W. Barrett, H.~Garcke, and R.~N{\"u}rnberg.
\newblock On the parametric finite element approximation of evolving
  hypersurfaces in {$\R^3$}.
\newblock {\em J. Comput. Phys.}, 227(9):4281--4307, 2008.

\bibitem{Barrett-2010-4}
J.~W. Barrett, H.~Garcke, and R.~N{\"u}rnberg.
\newblock {Numerical approximation of gradient flows for closed curves in
  $\mathbb{R}^d$}.
\newblock {\em IMA J. Numer. Anal.}, 30(1):4--60, 2010.

\bibitem{deckelnick1995convergence}
K.~Deckelnick and G.~Dziuk.
\newblock Convergence of a finite element method for non-parametric mean
  curvature flow.
\newblock {\em Numer. Math.}, 72(2):197--222, 1995.

\bibitem{DeckelnickDziuk}
K.~Deckelnick and G.~Dziuk.
\newblock On the approximation of the curve shortening flow.
\newblock In {\em Calculus of variations, applications and computations
  ({P}ont-\`a-{M}ousson, 1994)}, volume 326 of {\em Pitman Res. Notes Math.
  Ser.}, pages 100--108. Longman Sci. Tech., Harlow, 1995.

\bibitem{deckelnick2000error-graph}
K.~Deckelnick and G.~Dziuk.
\newblock Error estimates for a semi-implicit fully discrete finite element
  scheme for the mean curvature flow of graphs.
\newblock {\em Interfaces Free Bound.}, 2(4):341--359, 2000.

\bibitem{DeckelnickDE2005}
K.~Deckelnick, G.~Dziuk, and C.~M. Elliott.
\newblock Computation of geometric partial differential equations and mean
  curvature flow.
\newblock {\em Acta Numerica}, 14:139--232, 2005.

\bibitem{Demlow2009}
A.~Demlow.
\newblock Higher--order finite element methods and pointwise error estimates
  for elliptic problems on surfaces.
\newblock {\em SIAM J. Numer. Anal.}, 47(2):805--807, 2009.

\bibitem{Doktor-1976}
P.~Doktor.
\newblock {Approximation of domains with Lipschitzian boundary}.
\newblock {\em Časopis pro pěstování matematiky}, 101(3):237--255, 1976.

\bibitem{Dziuk88}
G.~Dziuk.
\newblock Finite elements for the {B}eltrami operator on arbitrary surfaces.
\newblock {\em Partial differential equations and calculus of variations,
  Lecture Notes in Math., 1357, Springer, Berlin}, pages 142--155, 1988.

\bibitem{Dziuk90}
G.~Dziuk.
\newblock An algorithm for evolutionary surfaces.
\newblock {\em Numer. Math.}, 58(1):603--611, 1990.

\bibitem{Dziuk94}
G.~Dziuk.
\newblock Convergence of a semi-discrete scheme for the curve shortening flow.
\newblock {\em Math. Models Methods Appl. Sci.}, 4(4):589--606, 1994.

\bibitem{Dziuk1999}
G.~Dziuk.
\newblock Discrete anisotropic curve shortening flow.
\newblock {\em SIAM J. Numer. Anal.}, 36(6):1808--1830, 1999.

\bibitem{DziukElliott_ESFEM}
G.~Dziuk and C.~M. Elliott.
\newblock Finite elements on evolving surfaces.
\newblock {\em IMA J. Numer. Anal.}, 27(2):262--292, 2007.

\bibitem{DziukElliott_acta}
G.~Dziuk and C.~M. Elliott.
\newblock Finite element methods for surface {PDE}s.
\newblock {\em Acta Numerica}, 22:289--396, 2013.

\bibitem{Dziuk-Elliott-2013-MC}
G.~Dziuk and C.~M. Elliott.
\newblock {$L^2$-estimates for the evolving surface finite element method}.
\newblock {\em Math. Comp.}, 82(281):1--24, 2013.

\bibitem{DziukKronerMuller}
G.~Dziuk, D.~Kr\"{o}ner, and T.~M\"{u}ller.
\newblock Scalar conservation laws on moving hypersurfaces.
\newblock {\em Interfaces Free Bound.}, 15(2):203--236, 2013.

\bibitem{Ecker2012}
K.~Ecker.
\newblock {\em Regularity theory for mean curvature flow}.
\newblock Springer, 2012.

\bibitem{Elliott-Fritz-2017}
C.~M. Elliott and H.~Fritz.
\newblock {On approximations of the curve shortening flow and of the mean
  curvature flow based on the DeTurck trick}.
\newblock {\em IMA J. Numer. Anal.}, 37(2):543--603, 2017.

\bibitem{Kovacs2018}
B.~Kov\'{a}cs.
\newblock High-order evolving surface finite element method for parabolic
  problems on evolving surfaces.
\newblock {\em IMA J. Numer. Anal.}, 38(1):430--459, 2018.

\bibitem{Kovacs-Li-Lubich-2019}
B.~Kov{\'a}cs, B.~Li, and C.~Lubich.
\newblock {A convergent evolving finite element algorithm for mean curvature
  flow of closed surfaces}.
\newblock {\em Numer. Math.}, 143:797--853, 2019.

\bibitem{KLLP2017}
B.~Kov\'{a}cs, B.~Li, C.~Lubich, and C.~{Power Guerra}.
\newblock Convergence of finite elements on an evolving surface driven by
  diffusion on the surface.
\newblock {\em Numer. Math.}, 137(3):643--689, 2017.

\bibitem{Li-2020-SINUM}
B.~Li.
\newblock {Convergence of Dziuk's linearly implicit parametric finite element
  method for curve shortening flow}.
\newblock {\em SIAM J. Numer. Anal.}, 58(4):2315--2333, 2020.

\bibitem{Pozzi2007}
P.~Pozzi.
\newblock Anisotropic curve shortening flow in higher codimension.
\newblock {\em Math. Meth. Appl. Sci.}, 30(11):1243--1281, 2007.

\bibitem{Pozzi-Stinner-2017}
P.~Pozzi and B.~Stinner.
\newblock {Curve shortening flow coupled to lateral diffusion}.
\newblock {\em Numer. Math.}, 135:1171--1205, 2017.

\bibitem{2006-Rusu}
R.~Rusu.
\newblock {Numerische analysis für den Krümmungsfluß und den Willmorefluß}.
\newblock {\em PhD Thesis}, University of Freiburg, Freiburg, 2006.

\end{thebibliography}

\end{document}